\definecolor{black}{rgb}{0.0, 0.0, 0.0}
\definecolor{red}{rgb}{1.0, 0.5, 0.5}
\newcommand{\margnote}[1]{
\ifthenelse{\boolean{shownotes}}%
{\marginpar{\raggedright\tiny\texttt{#1}}}%
{}%
}
\newcommand{\hole}[1]{
\ifthenelse{\boolean{shownotes}}%
{\begin{center} \fbox{ \rule {.25cm}{0cm} \rule[-.1cm]{0cm}{.4cm}
\parbox{.85\textwidth}{\begin{center} \texttt{#1}\end{center}} \rule
{.25cm}{0cm}}\end{center}} {} }
\title[From BGK-alignment model to the pressured Euler-alignment system ]
{From BGK-alignment model to the pressured Euler-alignment system with singular communication weights}
\author[Choi]{Young-Pil Choi}
\address[Young-Pil Choi]{\newline Department of Mathematics\newline
Yonsei University, 50 Yonsei-Ro, Seodaemun-Gu, Seoul 03722, Republic of Korea}
\email{ypchoi@yonsei.ac.kr}
\author[Hwang]{Byung-Hoon Hwang}
\address[Byung-Hoon Hwang]{\newline Department of Mathematics Education\newline
Sangmyung University, 20 Hongjimun 2-gil, Jongno-Gu, Seoul 03016, Republic of Korea}
\email{bhhwang@smu.ac.kr}
\numberwithin{equation}{section}
\newtheorem{theorem}{Theorem}[section]
\newtheorem{lemma}{Lemma}[section]
\newtheorem{proposition}{Proposition}[section]
\newtheorem{remark}{Remark}[section]
\newtheorem{definition}{Definition}[section]
\newcommand{\R}{\mathbb R}
\newcommand{\K}{\kappa} 
\newcommand{\N}{\mathbb N}
\newcommand{\om}{\Omega}
\newcommand{\ls}{\lesssim}
\newcommand{\T}{\mathbb T}
\newcommand{\mc}{\mathcal C}
\newcommand{\bq}{\begin{equation}}
\newcommand{\eq}{\end{equation}}
\newcommand{\e}{\varepsilon}
\newcommand{\lt}{\left}
\newcommand{\rt}{\right}
\newcommand{\lal}{\langle}
\newcommand{\ral}{\rangle}
\newcommand{\pa}{\partial}
\newcommand{\mh}{\mathcal{H}}
\newcommand{\me}{\mathcal{E}}
\newcommand{\into}{\int_\om}
\newcommand{\intoo}{\iint_{\om \times \om}}
\newcommand{\intr}{\int_{\R^d}}
\newcommand{\intt}{\int_{\T^d}}
\newcommand{\inttr}{\iint_{\R^d \times \R^d}}
\newcommand{\intor}{\iint_{\om \times \R^d}}
\DeclareMathOperator*{\esssup}{ess\,sup}
\def\moverlay{\mathpalette\mov@rlay}
\def\mov@rlay#1#2{\leavevmode\vtop{%
   \baselineskip\z@skip \lineskiplimit-\maxdimen
   \ialign{\hfil$\m@th#1##$\hfil\cr#2\crcr}}}
\newcommand{\charfusion}[3][\mathord]{
    #1{\ifx#1\mathop\vphantom{#2}\fi
        \mathpalette\mov@rlay{#2\cr#3}
      }
    \ifx#1\mathop\expandafter\displaylimits\fi}
\begin{document}
%%%%%%%%%%%%%%%%
\allowdisplaybreaks

\date{\today}

%\subjclass[]{35B40, 35Q35, 35B40}
\keywords{Isentropic Euler-alignment system, Cucker-Smale model, BGK model, hydrodynamic limit, relative entropy, singular communication weight}

\begin{abstract} 
This paper is devoted to a rigorous derivation of the  isentropic Euler-alignment system with singular communication weights $\phi_\alpha(x) = |x|^{-\alpha}$ for some $\alpha > 0$. We consider a kinetic BGK-alignment model consisting of a kinetic BGK-type equation with a singular Cucker-Smale alignment force. By taking into account a small relaxation parameter, which corresponds to the asymptotic regime of a strong effect from BGK operator, we quantitatively derive the isentropic Euler-alignment system with pressure $p(\rho) = \rho^\gamma$, $\gamma = 1 + \frac2d$ from that kinetic equation.
\end{abstract}

\maketitle \centerline{\date}

\tableofcontents

%%%%%%%%%%%%%%%%%%%%%%%%%%%%%%%%%%%%%%%%%%%%%%%%%%%%%%%%%%%%%%%%%%%%%%%%%%%%%%%%%5
%
%
%                        Section: Introduction 
%
%
%%%%%%%%%%%%%%%%%%%%%%%%%%%%%%%%%%%%%%%%%%%%%%%%%%%%%%%%%%%%%%%%%%%%%%%%%%%%%%%%%
\section{Introduction}
The main purpose of this work is to rigorously and quantitatively derive the following isentropic Euler equations with nonlocal velocity alignment forces, often referred to as the isentropic Euler-alignment system (in short, EAS):
\begin{align}\label{main_pE}
\begin{aligned}
&\pa_t \rho + \nabla_x \cdot (\rho u) = 0, \quad (x,t) \in \om \times \R_+,\cr
&\pa_t (\rho u) + \nabla_x \cdot (\rho u \otimes u +  \rho^\gamma \mathbb{I}_d ) = -  \rho \int_\om \phi_\alpha(x-y)(u(x) - u(y))\rho(y)\,dy,
\end{aligned}
\end{align}
subject to the smooth initial data 
\[
(\rho(x,t),u(x,t))|_{t=0} =: (\rho_0(x),u_0(x)),\quad x\in\Omega,
\]
where $\Omega$ is a spatial domain, either $\T^d$ or $\R^d$ with the space dimension $d \geq 1$, and $\gamma = 1 + \frac2d$. Here $\rho = \rho(x,t)$ represents the density and $u=u(x,t)$ is the velocity at position $x \in \om$ and time $t \in \R_+$. The right-hand side of the momentum equation in \eqref{main_pE} is the velocity alignment force, where $\phi_\alpha: \R^d \to \R_+$ is called the communication weight function. Throughout this paper, we assumed that $\phi$ is  radially symmetric, i.e., $\phi(x) = \hat\phi(|x|)$ for some $\hat\phi : \R_+  \to \R_+$. By abuse of notation, we use $\phi(r):=\hat{\phi}(r)$ for simplicity. In the current work, we deal with the singular communication weight function which has the form of
\bq\label{inf}
\phi_\alpha(r) = \frac{1}{r^\alpha}, \quad \alpha > 0.
\eq
Note that when $\om = \T^d$, the singular communication weight $\phi_\alpha : [0,2\pi) \to \R$ can be chosen as 
\bq\label{phi_per}
\phi_\alpha(r)  = \left\{ \begin{array}{ll}
\displaystyle \frac1{r^\alpha}  & \textrm{if $r \in (0,\pi]$}\\[2mm]
\displaystyle \frac1{(2\pi -r)^\alpha}  & \textrm{if $r \in (\pi, 2\pi)$}\\[2mm]
  0 & \textrm{if $r = 0$}
  \end{array} \right..
\eq

The EAS arises as a macroscopic description of the celebrated Cucker-Smale model \cite{CS07} which is a Newton-type microscopic model for an interacting many-body system exhibiting a flocking phenomenon. For that reason, EAS is also often called the hydrodynamic Cucker-Smale model. In \cite{CS07}, the regular  function which has the form of 
\[%\bq\label{reg}
\phi_\alpha(r) = \frac{1}{(1+r^2)^{\frac\alpha2}}, \quad \alpha > 0.
\]%\eq
is considered, and sufficient conditions for initial data  and $\alpha > 0$ leading to the velocity alignment behavior of solutions are analyzed. Later, collision avoidance behaviors of solutions for the Cucker-Smale model with singular weights are discussed in \cite{ACHL12, CCMP17, MMPZ19, P14}. In particular, it is observed that the Cucker-Smale flocking particles avoid collisions regardless of the initial data when $\alpha \geq 1$. We refer to \cite{CCP17, CFRT10,  CHL17,HL09, HT08, MMPZ19, S21, TT14} and references therein for the flocking estimates of solutions to particle, kinetic, and hydrodynamic descriptions of Cucker-Smale model and its variants.

%%%%%%%%%%%%%%%%%%%%%%%%%%%%%%%%%%%%%%%%%%%%%%%%%%%%%%%%%%%%%%
%
%
%.  
%.  
%
%
%%%%%%%%%%%%%%%%%%%%%%%%%%%%%%%%%%%%%%%%%%%%%%%%%%%%%%%%%%%%%%%%%%%%
\subsection{Derivation of EAS from mesoscopic descriptions} The mesoscopic description of Cucker-Smale model is given by the following Vlasov-type equation:
\bq\label{kin}
\pa_t f + v\cdot\nabla_x f  + \nabla_v \cdot \lt(F_{\phi_\alpha}[f]f\rt)  = 0, 
\eq
where $f = f(x,v,t)$ stands for the one-particle distribution function at $(x,v) \in \Omega \times \R^d$ and time $t > 0$.  
Here $F_{\phi_\alpha}$ represents the velocity alignment force field:
\[
F_{\phi_\alpha}[f](x,v) := \iint_{\om \times \R^d} \phi_\alpha(x-y) (w-v)f(y,w)\,dydw. 
\]
For the kinetic Cucker-Smale model \eqref{kin} with the singular communication weight \eqref{inf}, the global-in-time existence of measure-valued solutions is studied in \cite{PP18} when $\alpha < \frac12$, and the local-in-time existence and uniqueness of $L^p$ solutions are obtained in \cite{CCH14} when $\alpha < d-1$. The uniform-in-time mean-field limit from particle systems in one dimension is recently discussed in \cite{CZ21} when $\alpha < 1$.

Formally, by introducing the macroscopic density $\rho_f(x,t)$ and bulk velocity $u_f(x,t)$ as 
\[
\rho_f(x,t):=\intr f(x,v,t)\,dv \quad \mbox{and} \quad u_f(x,t):= \frac{\intr vf(x,v,t)\,dv}{\rho_f(x,t)},
\]
respectively, we derive from \eqref{kin} that
\begin{align*}%\label{formal}
\begin{aligned}
&\pa_t \rho_f + \nabla_x \cdot (\rho_f u_f) = 0,\cr
&\pa_t (\rho_f u_f) + \nabla_x \cdot (\rho_f u_f \otimes u_f) + \nabla_x \cdot \lt( \intr (v - u_f) \otimes (v - u_f) f\,dv\rt) \cr
&\quad = -  \rho_f \int_\om \phi_\alpha(x-y)(u_f(x) - u_f(y))\rho_f(y)\,dy.
\end{aligned}
\end{align*}

There are several formal ways of closing the momentum equations above. 
\subsubsection{Pressureless EAS}

Taking into account the mono-kinetic ansatz $f(x,v) = \rho_f(x) \delta_{u_f}(v)$ leads to
\[
\intr (v - u_f) \otimes (v - u_f) f\,dv = 0.
\]
Thus, this closure assumption results in the following pressureless EAS:
\begin{align}\label{pEAS}
\begin{aligned}
&\pa_t \rho_f + \nabla_x \cdot (\rho_f u_f) = 0,\cr
&\pa_t (\rho_f u_f) + \nabla_x \cdot (\rho_f u_f \otimes u_f)   = -  \rho_f \int_\om \phi_\alpha(x-y)(u_f(x) - u_f(y))\rho_f(y)\,dy.
\end{aligned}
\end{align}
In \cite{CCJ21,FK19}, the strong local alignment forces are considered in the kinetic equation \eqref{kin}, i.e., $\frac1\e \nabla_v \cdot ((v-u)f)$ on the right hand side of \eqref{kin}, and it is found that the pressureless EAS \eqref{pEAS} can be rigorously derived when $\e \to 0$. Recently,  the rigorous derivation of the pressureless EAS directly from the particle Cucker-Smale model, not via its kinetic formulation, by means of mean-field limit is investigated in \cite{CC21}.  In those works, the communication weight function is assumed to be regular enough, for instance $\phi_\alpha \in W^{1,\infty}(\R^d)$. For the system \eqref{pEAS} with regular communication weight functions, the critical threshold phenomenon is first proved in \cite{TT14}, and later a sharp dichotomy of initial configurations, either subcritical initial data which evolve into global strong solutions, or supercritical initial data which will blow up in finite time, is analyzed in \cite{CCTT16}. The critical thresholds phenomena in \eqref{pEAS} with the singular communication weights are studied in \cite{T20}. We also refer to \cite{LS21,LSpre, LS19} for the global well-posedness theory and long-time dynamics of solutions to the pressureless EAS \eqref{pEAS}.

\subsubsection{Pressured EAS}\label{ssec_pre}
In order to have the pressure term in the momentum equations, at the formal level, we can deal with the following three different closure assumptions on $f$ depending on $\gamma \in [1,1 + \frac 2d]$. More precisely, it follows from \cite{BV05, Bou99} that 
\[
\intr (v - u_f) \otimes (v - u_f) f\,dv = \rho_f^\gamma \mathbb{I}_d, \quad \gamma \in \lt[1, 1 + \frac2d \rt],
\]
where
\bq\label{pre}
f = \left\{ \begin{array}{ll}
  \displaystyle \frac{\rho_f}{(2\pi)^{d/2}} \exp\lt(-\frac{|u_f-v|^2}{2} \rt) & \textrm{for $\gamma = 1$}\\[3mm]
  \displaystyle c_{\gamma,d}\left(\frac{2\gamma}{\gamma-1}\rho_f^{\gamma-1}-|v-u_f|^2\right)^{n/2}_+& \textrm{for $\gamma \in (1, 1 + \frac2d)$}\\[3mm]
  \displaystyle   \mathds{1}_{|u_f-v|^d\le c_d\rho_f} & \textrm{for $\gamma = 1 + \frac2d$}
  \end{array} \right.
\eq
with
\bq\label{para}
n =\frac{2}{\gamma-1}-d, \quad c_d=\frac{d}{|\mathbb{S}_d|}, \quad \mbox{and} \quad c_{\gamma,d} =\left(\frac{2\gamma}{\gamma-1}\right)^{-1/(\gamma-1)}\frac{\Gamma\left(\frac{\gamma}{\gamma-1}\right)}{\pi^{d/2}\Gamma(n/2+1)}. 
\eq
Note that the uniform distribution-type closure assumption, which corresponds to the case $\gamma = 1 + \frac2d$, in \eqref{pre} gives the pressure term in our main system \eqref{main_pE}.

The local Maxwellian-type closure assumption can be justified by considering the nonlinear Fokker-Planck term on the right hand side of \eqref{kin}. More precisely, when the communication weight $\phi_\alpha$ is bounded,  the isothermal EAS, i.e. \eqref{main_pE} with $\gamma=1$, is rigorously derived in \cite{KMT15} (see also \cite{CCJ21}) from the following kinetic Fokker-Planck-alignment model:
 \bq\label{FKa}
 \pa_t f^\e + v\cdot\nabla_x f^\e  + \nabla_v \cdot \lt(F_{\phi_\alpha}[f^\e]f^\e\rt)  = \frac{1}{\e}\mathcal{N}_{{\rm FP}}[f^\e], 
 \eq
 where 
$\mathcal{N}_{{\rm FP}}$ denotes the nonlinear Fokker-Planck operator \cite{V02} given by 
\[
 \mathcal{N}_{{\rm FP}}[f](x,v) := \nabla_v \cdot ( (v - u_f)f +  \nabla_v f).
\]
Note that the asymptotic regime for \eqref{FKa} corresponds to a strong effect from the nonlinear Fokker-Planck term. Note that we expect $f^\e \sim  \frac{\rho_{f^\e}}{(2\pi)^{d/2}} \exp\lt(-\frac{|u_{f^\e}-v|^2}{2} \rt)$ for $\e \gg 1$, thus if $\rho_{f^\e} \to \rho$ and $u_{f^\e} \to u$ in some sense as $\e \to 0$, then we find
\[
\intr (v - u_{f^\e}) \otimes (v - u_{f^\e}) f^\e\,dv \to \rho \mathbb{I}_d \quad \mbox{as} \quad \e \to 0.
\]
In \cite{CCJ21, KMT15}, the almost everywhere convergences of $\rho_{f^\e}$, $\rho_{f^\e}u_{f^\e}$, and $f^\e$ toward $\rho$, $\rho u$, and $M_{\rho,u}$, respectively, are obtained as $\e \to 0$. More recently, the singular communication weight case with $\alpha \in (0,1)$ is also covered in \cite{CKpre22} by using the relative entropy method with careful analyses of singular integrals.

%%%%%%%%%%%%%%%%%%%%%%%%%%%%%%%%%%%%%%%%%%%%%%%
%
%
%
%
%
%
%%%%%%%%%%%%%%%%%%%%%%%%%%%%%%%%%%%%%%%%%%%%%%%

%\subsection{BGK-alignment model}

%%%%%%%%%%%%%%%%%%%%%%%%%%%%%%%%%%%%%%%%%%%%%%%%%%%%%%%%%%%%%%
%
%
%.  
%.  
%
%
%%%%%%%%%%%%%%%%%%%%%%%%%%%%%%%%%%%%%%%%%%%%%%%%%%%%%%%%%%%%%%%%%%%%
\subsection{Outline of methodology}

In order to derive the isentropic EAS \eqref{main_pE} from kinetic descriptions, motivated from \cite{BB00, CKpre22}, we shall need to consider the following kinetic BGK-alignment model:
\begin{align}\label{main_eq}
\begin{aligned}
&\pa_t f^\e + v\cdot\nabla_x f^\e  + \nabla_v \cdot \lt(F_{\phi^\varepsilon_\alpha}[f^\e]f^\e\rt)  = \frac{1}{\e}Q[f^\e], 
\end{aligned}
\end{align}
subject to the initial data:
\[
f^\e(x,v,t)|_{t=0} =: f^\e_0(x,v), \quad (x,v) \in \om \times \R^d.
\]
Here $\phi^\e_\alpha: \R^d \to \R_+$ is a regularized communication weight function defined as
\bq\label{r_inf}
\phi^\e_\alpha(r) = \phi_\alpha(\sqrt{\e^\beta + r^2}) \quad\mbox{with}\quad \alpha >0,\quad \beta>0
\eq
and
$Q[f]=Q[f](x,v,t)$ denotes the BGK-type relaxation operator given by 
\[
Q[f] := M[f] - f, \quad \mbox{where}  \quad M[f] =   \mathds{1}_{|u-v|^d\le c_d\rho}.
\]
It is clear that at the formal level, $\phi^\e_\alpha \to \phi_\alpha$ and $Q[f^\e] = M[f^\e] - f^\e \to 0$ as $\e \to 0$, thus if $\rho_{f^\e} \to \rho$, $u_{f^\e} \to u$, and $f^\e \to   \mathds{1}_{|u-v|^d\le c_d\rho}$ as $\e\to 0$, then by the discussion above we have that $\rho$ and $u$ satisfy our main system, isentropic EAS \eqref{main_pE}. We notice that a similar scaling for the weight function $\phi_\alpha$ \eqref{r_inf} is also dealt with in \cite{PS17} to study hyperbolic limits of kinetic Cucker-Smale model with singular weights. Throughout this paper, we assume that $f^\e$ is a probability density, i.e., $\|f^\e(\cdot,\cdot,t)\|_{L^1} = 1$ for $t\geq 0$ and $\e>0$, since the total mass is preserved in time. 

To the authors' best knowledge, a rigorous derivation of the isentropic EAS, even with the regular communication weights, has not been established yet. For that reason, the main purpose of this work is to study the derivation of the isentropic EAS rigorously and quantitatively. We would like to stress that our argument can be directly applied to the case with regular communication weights. We also remark that the hydrodynamic limit is studied in \cite{BV05} when the force field is a given function as $F = F(x)$ with $F \in L^\infty(\om)$.

Our strategy is based on the relative entropy method, also often referred to as modulated energy method, which relies on the weak-strong uniqueness principle for systems of conservation laws \cite{Daf79, DiP79}. Later, it has been successfully applied to various hydrodynamic limit problems \cite{BV05, Bou99, Bre00, CCJ21, CJ21, MV08, SR09, Yau91}, etc. To make use of relative entropy method, we rewrite the pressured EAS \eqref{main_pE} as a conservative form:
\[%\bq\label{sys_cons}
\pa_t U + \nabla \cdot A(U) = F(U),
\]%\eq
where 
\[
 U := \begin{pmatrix}
\rho \\
m 
\end{pmatrix} 
\quad \mbox{with} \quad m := \rho u, \quad
A(U) := \begin{pmatrix}
m  \\
\displaystyle \frac{m \otimes m}{\rho} + \rho^\gamma \mathbb{I}_d
\end{pmatrix},
\]
and
\[
F(U) := \begin{pmatrix}
0 \\
\displaystyle   \rho \into \phi_\alpha(x-y)(u(y) - u(x))\rho(y)\,dy  \end{pmatrix}.
\]
The free energy of the above system is given by
\[%\bq\label{ent_mac}
E(U) := \frac{|m|^2}{2\rho} + \frac{1}{\gamma-1}\rho^\gamma.
\]%\eq
We then define the relative entropy functional $\me$ between two states of the system $U$ and $\bar U$ as follows.
\begin{align}\label{def_rel}
\begin{aligned}
\me(\bar U|U) &:= E(\bar U) - E(U) - DE(U)(\bar U-U) \quad \mbox{with} \quad \bar U := \begin{pmatrix}
        \bar\rho \\
        \bar m \\
    \end{pmatrix}, \quad \bar m = \bar\rho \bar u,\cr
    &= \frac{\bar\rho}{2}|\bar u - u|^2 + \mh(\bar\rho| \rho),
\end{aligned}
\end{align}
where $D E(U)$ denotes the derivative of $E$ with respect to $U$, and
\bq\label{def_h}
\mh(\bar\rho | \rho):=\frac{1}{\gamma-1}(\bar\rho^\gamma - \rho^\gamma+\gamma(\rho - \bar\rho)\rho^{\gamma-1}).
\eq
We now let $U^\e = (\rho^\e,m^\e = \rho^\e u^\e)$ be the macroscopic quantity corresponding to a solution of the BGK-alignment model \eqref{main_eq} and estimate a quantitative bound on 
\[
\into \me(U^\e|U)\,dx,
\]
where
\[
\rho^\e := \intr f^\e\,dv \quad \mbox{and} \quad \rho^\e u^\e = \intr vf^\e\,dv.
\]
It follows from \eqref{main_eq} that $U^\e = (\rho^\e,m^\e = \rho^\e u^\e)$ satisfies 
\[
\pa_t U^\e + \nabla \cdot A(U^\e) - F^\e(U^\e) = \begin{pmatrix} 0 \\ \nabla \cdot \lt( \displaystyle \intr (u^\e \otimes u^\e - v \otimes v)f^\e\,dv + C_d(\rho^\e)^\gamma \mathbb{I}_d \rt)  \end{pmatrix},
\]
where $C_d$ is given by
\bq\label{c_d}
C_d := \frac{|\mathbb{S}_{d-1}|}{d(d+2)} \lt(\frac{d}{|\mathbb{S}_{d-1}|} \rt)^{\frac{d+2}d}.
\eq
Here the source term $F^\e=F^\e(U^\e)$ is given as 
\[
F^\e(U^\e) := \begin{pmatrix}
	0 \\
	\displaystyle   \rho^\e \into \phi^\e_\alpha(x-y)(u^\e(y) - u^\e(x))\rho^\e(y)\,dy  \end{pmatrix}.
\]
Obviously, the main difficulty in estimating the relative entropy arises from the singular communication weight $\phi_\alpha$. As mentioned above, for $\alpha \in (0,1]$, the isothermal EAS is rigorously derived from the kinetic Fokker-Planck-alignment model \eqref{FKa} recently in \cite{CKpre22}. In that work, the bound estimate $\phi_\alpha(x-y)|u(x) - u(y)| \leq {\rm Lip}(u) |x-y|^{1-\alpha} \ls 1$ for $|x - y| \ls 1$ is heavily used, thus it seems hard to extend to the case with $\alpha > 1$.  We mainly follow the argument used in \cite{CKpre22}, however, to cover the more singular regime $\alpha > 1$, we consider the BGK-alignment model \eqref{kin} and have a better control of $\rho^\e - \rho$ from the relative entropy functional $\mh(\rho^\e|\rho)$ appeared in \eqref{def_h}. This together with a careful analysis of singular integrals enables us to close the relative entropy estimate, and the almost everywhere convergences of $\rho^\e$ and $\rho^\e u^\e$ to $\rho$ and $\rho u$ are obtained, respectively.

As stated above, we employ the relative entropy argument, and this requires the existence of weak solutions to the equation \eqref{main_eq} and regular solutions to the limiting system \eqref{main_pE}. Since local existence theories for Euler-type equations have been well developed, we omit the details of proof for the uniqueness and existence of regular solutions to the system \eqref{main_pE} and prove the existence of $L^1_+ \cap L^\infty$-solutions to the BGK-alignment model \eqref{main_eq}. Indeed, when $\om =\T^d$, the local-in-time well-posedness for the isentropic EAS \eqref{main_pE} can be obtained by using a similar argument as in \cite{CTT}, see also \cite{C19,CDS20}. In the whole space case, we can use almost the same argument as in \cite{CKpre22} to establish the local well-posedness.

Concerning the existence of weak solutions to \eqref{main_eq}, in the one dimensional case, the global existence of $L^1_+$-solutions to the equation \eqref{main_eq} when $F \equiv 0$ and $M[f]$ is given as the positive function in \eqref{pre}, see \eqref{eq_pos} below, is established in \cite{BB00}. In a recent work \cite{CHpre}, global existence of $L^1_+ \cap L^\infty$-solutions to the equation \eqref{main_eq} is obtained in dimension $d\geq 1$ when $F \equiv 0$. In that work, the positive function in \eqref{pre} is also dealt with. In our case, even though the force field $F_{\phi^\e_\alpha}$ is not singular for $\e>0$, it produces new difficulties that are not observed in previous literature due to its nonlinearity and nonlocality.  To overcome that difficulties, inspired by \cite{CHpre}, we regularize the macroscopic observables $\rho$ and $u$ in $M[f]$, and we further regularize $M[f]$ itself to remove its discontinuity. We then linearize that regularized equation. For that regularized and linearized equation, we provide some uniform bound estimates and Cauchy estimate for approximate solutions. In order to show the Cauchy estimate, we use a velocity-weighted $L^1$ space which enables us to obtain an appropriate information from the support of $M[f]$. However, our force field $F_{\phi_\alpha} = \phi_\alpha * (\rho_f u_f) - v (\phi_\alpha * \rho_f)$ has a linear growth with respect to $v$, and this causes a 
growth of $v$-weight for $f$. To control the velocity growth of $f$, motivated from \cite{CJpre}, we estimate the approximate solutions in a weighted $L^\infty$ space by exponential velocity-weight. By combining this and extracting a proper dissipative effect from the force field, we are able to close the Cauchy estimate. This gives the existence of solutions to the regularized equation. We then finally use weak and strong compactness theorems to pass to the limit in the regularized equation.

%%%%%%%%%%%%%%%%%%%%%%%%%%%%%%%%%%%%%%%%%%%%%%%%%%%%%%%%%%%%%%
%
%
%.  
%.  
%
%
%%%%%%%%%%%%%%%%%%%%%%%%%%%%%%%%%%%%%%%%%%%%%%%%%%%%%%%%%%%%%%%%%%%%
\subsection{Main results}

We first introduce our notion of weak solutions to equation \eqref{main_eq} and then state the existence theorem. 

 \begin{definition}\label{def_weak} For a given $T>0$, we say that $f^\e$ is a weak solution to \eqref{main_eq} if the following conditions are satisfied: 
	\begin{itemize}
		\item[(i)] $f^\e \in L^\infty(0,T; L^1_+ \cap L^\infty(\om \times \R^d))$ and 
		\item[(ii)] for all $\eta \in \mc^1_c(\om \times \R^d \times [0,T])$ with $\eta(x,v,T) = 0$,
	\begin{align*}
	&- \iint_{\om\times\R^d} f^\e_0 \eta(x,v,0)\,dxdv - \int_0^T \iint_{\om\times\R^d} f^\e (\pa_t \eta + v \cdot \nabla_x \eta+F_{\phi^\e_\alpha}[f^\e]\cdot\nabla_v\eta)\,dxdvdt \cr
	&\qquad= \int_0^T \iint_{\om\times\R^d} \lt(M[f^\e] - f^\e\rt)\eta\,dxdvdt.
	\end{align*}
	\end{itemize}
\end{definition}

\begin{theorem}[Existence of $L^1_+ \cap L^\infty$-solutions $f^\e$]\label{thm_ext} Let $T>0$. Suppose that $f^\e_0$ satisfies
\[
f^\e_0 \in L^1_+ \cap L^\infty(\om \times \R^d) \quad \mbox{and} \quad |v|^2 f^\e_0 \in L^1(\om \times \R^d).
\]
Then there exists a weak solution to \eqref{weak_eq} in the sense of Definition \ref{def_weak} satisfying 
\begin{align}\label{free_ineq}
\begin{aligned}
&\frac{1}{2}\intor |v|^2f^\e\,dxdv+\frac{1}{2\varepsilon}\int_0^t\intor|v|^2(f^\e-M[f^\e])\,dxdvds\cr
&\quad+\frac{1}{2}\int_0^t \intor \,\phi^\varepsilon_\alpha(x-y) |u^\e(x)-u^\e(y)|^2\rho^\varepsilon (x)\rho^\varepsilon (y)\,dxdyds
\le \frac{1}{2}\intor |v|^2f_0^\e\,dxdv.
\end{aligned}
\end{align}
\end{theorem}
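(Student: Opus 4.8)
The plan is to construct $f^\e$ through a regularization and linearization scheme, derive a priori bounds that are uniform with respect to the regularization, pass to the limit by compactness, and finally read off the free energy inequality \eqref{free_ineq} from the energy identity satisfied by the approximations. First I would regularize the two structural difficulties of \eqref{main_eq}, namely the discontinuity of the equilibrium $M[f]=\mathds{1}_{|u-v|^d\le c_d\rho}$ and the nonlinear, nonlocal nature of the alignment force $F_{\phi^\e_\alpha}[f]=\phi^\e_\alpha*(\rho_f u_f)-v\,(\phi^\e_\alpha*\rho_f)$. Following the idea of \cite{CHpre}, for a parameter $\delta>0$ I would mollify the macroscopic fields $\rho_f,u_f$ entering $M[f]$ and replace the sharp indicator by a smooth approximation $M^\delta[f]$ preserving the zeroth and first $v$-moments, and then set up the iteration in which, given $f^n$, the next iterate solves the linear inhomogeneous transport problem
\[
\pa_t f^{n+1}+v\cdot\nabla_x f^{n+1}+\nabla_v\cdot\big(F_{\phi^\e_\alpha}[f^n]\,f^{n+1}\big)=\frac1\e\big(M^\delta[f^n]-f^{n+1}\big).
\]
For fixed $\e,\delta$ the frozen coefficient $F_{\phi^\e_\alpha}[f^n]$ is Lipschitz in $x$ and affine in $v$, so its characteristic flow is globally defined and $f^{n+1}$ is given explicitly by integration along characteristics together with Duhamel's formula; in particular $f^{n+1}\ge 0$ whenever $f^n\ge 0$.

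Next I would establish bounds uniform in $n$ and $\delta$. The $L^1$ bound is immediate on integrating the equation, the transport and force terms dropping out and the relaxation balancing mass since $\intr M^\delta[f^n]\,dv=\rho_{f^n}$. The $L^\infty$ bound follows by propagation along characteristics: for fixed $\e,\delta$ the quantity $\phi^\e_\alpha*\rho_{f^n}$ is bounded, so the growth factor generated by the velocity divergence $\nabla_v\cdot F_{\phi^\e_\alpha}[f^n]=-d\,(\phi^\e_\alpha*\rho_{f^n})$ is controlled by a Gr\"onwall argument, while the relaxation keeps $f^{n+1}$ of the size of $M^\delta[f^n]$. The genuine obstacle is the linear-in-$v$ growth of the drift $F_{\phi^\e_\alpha}$, which spreads the velocity support and threatens every $v$-moment; to tame it I would, following \cite{CJpre}, measure the iterates in a weighted $L^\infty$ space with exponential velocity weight $e^{\lambda|v|}$ together with a polynomially $v$-weighted $L^1$ space. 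The exponential weight turns the drift-induced growth into a controllable term once the damping part $-v\,(\phi^\e_\alpha*\rho)$ of the force is exploited, while the $v$-weighted $L^1$ norm makes the compact velocity support of $M^\delta$ usable in the difference estimate. The second moment $\intor |v|^2 f^{n+1}$ is propagated by testing against $|v|^2$, the alignment contribution being sign-definite after symmetrization.

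I would then show the map $f^n\mapsto f^{n+1}$ is a contraction for short time in the $v$-weighted $L^1$ norm: the difference of the frozen forces is controlled by the difference of the iterates through the boundedness of $\phi^\e_\alpha$ and the mollification in $M^\delta$, and the relaxation furnishes the contraction constant, which is iterated to cover $[0,T]$. This produces a solution $f^{\e,\delta}$ of the regularized (unfrozen) equation. Removing the regularization $\delta\to0$ is the final step: the uniform $L^1\cap L^\infty$ and weighted-moment bounds give weak-$*$ compactness, and velocity averaging together with strong compactness in $x$ upgrade the convergence of $\rho^{\e,\delta}$ and $\rho^{\e,\delta}u^{\e,\delta}$ to strong convergence, which is precisely what allows passage to the limit in the nonlinear nonlocal force and in $M^\delta[f^{\e,\delta}]\to M[f^\e]$, yielding a weak solution in the sense of Definition \ref{def_weak}.

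Finally, the inequality \eqref{free_ineq} follows from the energy identity obtained by testing \eqref{main_eq} with $\tfrac{|v|^2}{2}$ (rigorously at the level of the approximations, then transferred to $f^\e$ by weak lower semicontinuity): the transport term vanishes after integrating by parts in $x$, the BGK term produces $\tfrac1{2\e}\intor|v|^2(M[f^\e]-f^\e)$, and the force term symmetrizes, upon exchanging $(x,v)\leftrightarrow(y,w)$, to $-\tfrac12\int_0^t\intorr\phi^\e_\alpha(x-y)|v-w|^2 f^\e(x,v)f^\e(y,w)$. Writing $v-w=(u^\e(x)-u^\e(y))+(v-u^\e(x))-(w-u^\e(y))$ and using $\intr(v-u^\e)f^\e\,dv=0$, the kinetic alignment dissipation splits as the macroscopic dissipation $\intoo\phi^\e_\alpha(x-y)|u^\e(x)-u^\e(y)|^2\rho^\e(x)\rho^\e(y)$ plus nonnegative velocity-variance terms; bounding the kinetic dissipation from below by discarding the latter turns the energy equality into exactly \eqref{free_ineq}, and in particular no sign information on the relaxation term is required. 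I expect the main difficulty of the whole argument to lie not here but in closing the weighted Cauchy estimate against the linearly growing, nonlocal force, which is exactly where the exponential velocity weight and the extraction of the force's damping part are indispensable.
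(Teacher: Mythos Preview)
Your outline matches the paper's strategy closely: regularize the equilibrium and mollify the macroscopic fields (the paper uses a single parameter $\kappa$, writing $M_\kappa[f]*\varphi_\kappa$ with explicit cut-off denominators in $\rho^\kappa_f,u^\kappa_f$, rather than a generic smooth moment-preserving $M^\delta$), linearize and iterate, prove uniform-in-$n$ bounds, run a Cauchy estimate in a $\langle v\rangle^2$-weighted $L^1$ space, and pass to the limit via velocity averaging; your derivation of \eqref{free_ineq} is exactly the computation behind Section~\ref{pre_uni}.

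One point you underspecify: the Cauchy estimate in weighted $L^1$ generates the term
\[
\iint \langle v\rangle^2\,\mathrm{sgn}(f^{n+1}-f^n)\,(F_\phi[f^n]-F_\phi[f^{n-1}])\cdot\nabla_v f^{n+1}\,dxdv,
\]
since one cannot integrate the sign function by parts. Closing this requires a uniform-in-$n$ bound on $\nabla_v f^{n+1}$ against a polynomial velocity weight, so the paper in fact proves bounds in $W^{1,\infty}_\ell$ with weight $e^{\langle v\rangle^\ell}$, $\ell\in(1,2)$, not merely in $L^\infty_\ell$. The coupled $\partial_x f,\partial_v f$ estimates in that weighted space are precisely where the damping from $-v\,(\phi*\rho)$ and the structural hypothesis $|\nabla\phi|\le c_\phi\phi$ are actually used. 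The paper also adds a small strictly positive tail $\kappa\,e^{-|v|^2}/(1+|x|^q)$ to the regularized initial data; this secures a uniform lower bound on $\rho_{f^n}$, which is later needed to identify the a.e.\ limit of the regularized bulk velocity on $\{\rho_f>0\}$ when removing the regularization.
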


We now present our result on the quantified hydrodynamic limit from the BGK-alignment model \eqref{main_eq} to the isentropic EAS \eqref{main_pE}.

\begin{theorem}[Quantified hydrodynamic limit]\label{thm:hydrolimit} Let $T>0$ and $f^\e$ be a weak solution to the BGK-alignment model \eqref{main_eq} obtained in Theorem \ref{thm_ext}. Let $(\rho,u)$ be the unique local-in-time classical solution to the isentropic EAS \eqref{main_pE}  subject to the initial data $(\rho_0,u_0)$ satisfying $\rho \in \mc([0,T]; L^1_+ \cap L^\infty(\om) )$ and $u \in L^\infty(0,T; L^2\cap W^{1,\infty}(\om))$. Suppose that the initial data satisfy the well-prepared conditions:
\begin{itemize}
	\item[{\rm \bf (H1)}]
	\[
	\intor \frac{|v|^2}{2}f^\e_0\,dxdv-\into\rho_0^\e\left(\frac{(\rho_0^\e)^{\gamma-1}}{\gamma-1}+\frac{|u_0^\e|^2}{2}\right)\,d x  \leq C\sqrt\e.
	\]
	\item[{\rm \bf (H2)}]
	\[
	\into  \me(U^\e_0|U_0)\,dx\leq C\sqrt\e.
	\]
\end{itemize}
Moreover, depending on the spatial domain, we assume
\begin{itemize}
\item Periodic domain case $(\om = \T^d)$: $d \geq 1$ and $\alpha \in (0, \min\{2, 1+\frac d2 \})$.

\item Whole space case $(\om = \R^d)$: $d \geq 2$ and $\alpha \in (1,2)$.
\end{itemize}
 Then we have
	\begin{align*}
		%\begin{aligned}\label{convergence}
		&\rho^\e \to\rho \quad \mbox{a.e.} \quad and \quad in \ L^\infty(0,T;L^p(\om))\quad \mbox{with $p \in (1,\gamma)$},\\
		&\rho^\e u^\e \to\rho u \quad \mbox{a.e.} \quad and \quad in \ L^\infty(0,T;L^q(\om)) \quad \mbox{with $q \in [1, \frac{d+2}{d+1})$}, \quad \mbox{and}\cr
		 &\intr v \otimes v f^\e\,dv \to \rho u\otimes u + \rho^\gamma \mathbb{I}_d \quad \mbox{a.e.} \quad and \quad in \  L^1(\om \times (0,T))
		%\end{aligned}
	\end{align*}
	as $\e\to0$. In fact,  there exists a positive constant $C$, which is independent of $\e$ such that 
	\begin{align*}
		%\begin{aligned}\label{rel-ent-est}
			\sup_{t \in [0,T] }\into \left(\rho^\e|u^\e-u|^2 + \mh(\rho^\e| \rho)\right) dx  \le C\e^\lambda,
		%\end{aligned}
	\end{align*}
	where $\lambda:=\min\left\{\frac{1}{2}, \frac\beta2, \frac{\alpha \beta}{4}, \frac{\alpha\beta}{2(\alpha+2)}\right\}$. 
 
\end{theorem}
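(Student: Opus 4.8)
The plan is to run a relative entropy (modulated energy) argument comparing the macroscopic observables $U^\e=(\rho^\e,\rho^\e u^\e)$ of the kinetic solution with the classical solution $U=(\rho,\rho u)$ of \eqref{main_pE}, and to close a Gronwall inequality for $t\mapsto\into\me(U^\e|U)\,dx$. Writing $w^\e:=u^\e-u$, I would first differentiate $\into\me(U^\e|U)\,dx$ in time via the identity $\frac{d}{dt}\into\me(U^\e|U)=\into(DE(U^\e)-DE(U))\,\pa_t U^\e-\into D^2E(U)\,\pa_t U\,(U^\e-U)$, then substitute the macroscopic balance law for $U^\e$ (with its kinetic defect and regularized force $F^\e$) together with the conservative form $\pa_t U+\nabla\cdot A(U)=F(U)$ of the target system. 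By the standard weak--strong computation the contributions of the fluxes $A$ reorganize into a relative-flux term bounded by $\|\nabla u\|_{L^\infty}\into\me(U^\e|U)\,dx$, which is directly Gronwall-ready. It then remains to control two genuinely new contributions: the kinetic pressure defect and the singular alignment remainder.

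For the kinetic defect, since $M[f^\e]$ is the uniform distribution carrying the same mass and momentum as $f^\e$, one has $\intr v\otimes v\,f^\e\,dv-\big(\rho^\e u^\e\otimes u^\e+C_d(\rho^\e)^\gamma\mathbb{I}_d\big)=\intr(v-u^\e)\otimes(v-u^\e)(f^\e-M[f^\e])\,dv$, so after integrating the defect against $\nabla u$ the corresponding remainder is bounded by $\|\nabla u\|_{L^\infty}\intor|v-u^\e|^2|f^\e-M[f^\e]|\,dxdv$. Since \eqref{free_ineq} bounds $\tfrac1{2\e}\int_0^t\intor|v|^2(f^\e-M[f^\e])\,dxdvds$ uniformly, this quantity is $O(\e)$, and a Cauchy--Schwarz step (using the uniform second-moment bounds) produces a time-integrated contribution of order $\e^{1/2}$, which together with the well-prepared hypotheses (H1)--(H2) accounts for the $\tfrac12$ entering $\lambda$.

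The main obstacle is the alignment force. Collecting the force term from $F^\e$ (paired with the momentum component $w^\e$ of $DE(U^\e)-DE(U)$) and the term from $F(U)$, which after the cancellations in $-\into D^2E(U)F(U)\cdot(U^\e-U)\,dx$ reduces to $-\into\rho^\e w^\e\cdot\Psi\,dx$ with $\Psi(x)=\into\phi_\alpha(x-y)(u(y)-u(x))\rho(y)\,dy$, and splitting $u^\e=u+w^\e$, I expect the alignment contribution to take the form
\[
-\tfrac12\intoo\phi^\e_\alpha(x-y)\,|w^\e(x)-w^\e(y)|^2\rho^\e(x)\rho^\e(y)\,dxdy+\mathcal R_{\mathrm{den}}+\mathcal R_{\mathrm{reg}},
\]
where the first term is the favorable alignment dissipation and
\[
\mathcal R_{\mathrm{den}}=\intoo\phi^\e_\alpha(x-y)(u(y)-u(x))\cdot w^\e(x)\,\rho^\e(x)\big(\rho^\e(y)-\rho(y)\big)\,dxdy,
\]
\[
\mathcal R_{\mathrm{reg}}=\intoo\big(\phi^\e_\alpha-\phi_\alpha\big)(x-y)(u(y)-u(x))\cdot w^\e(x)\,\rho^\e(x)\rho(y)\,dxdy.
\]
The crucial point, and the reason the argument reaches $\alpha>1$ beyond the regime of \cite{CKpre22}, is that $\mathcal R_{\mathrm{den}}$ carries the density defect $\rho^\e-\rho$, controlled quadratically by $\mh(\rho^\e|\rho)$. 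Using $\phi^\e_\alpha(x-y)|u(x)-u(y)|\ls\|\nabla u\|_{L^\infty}|x-y|^{1-\alpha}$ and Cauchy--Schwarz, $\mathcal R_{\mathrm{den}}\ls(\into\rho^\e|w^\e|^2)^{1/2}(\into\rho^\e g^2)^{1/2}$ with $g=|\cdot|^{1-\alpha}\ast|\rho^\e-\rho|$; a Young/Hardy--Littlewood--Sobolev estimate then bounds $\|g\|_{L^2}$ in terms of the norm of $\rho^\e-\rho$ supplied by $\mh$, giving $\mathcal R_{\mathrm{den}}\ls\into\me(U^\e|U)\,dx$. The integrability of $|x-y|^{1-\alpha}$ against this density control is exactly what forces the ranges $\alpha\in(0,\min\{2,1+\tfrac d2\})$ on $\T^d$ and $\alpha\in(1,2)$ with $d\ge2$ on $\R^d$ (the whole-space case additionally needing decay of the kernel at infinity, whence $d\ge2$, $\alpha>1$). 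I expect this singular-integral step to be the hard part.

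Finally, $\mathcal R_{\mathrm{reg}}$ is a regularization error: splitting $|x-y|\ls\e^{\beta/2}$ from its complement and using $|\phi^\e_\alpha-\phi_\alpha|(r)\ls\min\{r^{-\alpha},\,\e^{\beta}r^{-\alpha-2}\}$ produces the powers $\tfrac\beta2,\ \tfrac{\alpha\beta}4,\ \tfrac{\alpha\beta}{2(\alpha+2)}$ of $\e$ appearing in $\lambda$. Collecting all estimates yields
\[
\into\me(U^\e|U)(t)\,dx\ls\into\me(U^\e_0|U_0)\,dx+\e^{\lambda}+\int_0^t\into\me(U^\e|U)\,dxds,
\]
so Gronwall together with (H2) gives $\sup_{[0,T]}\into\me(U^\e|U)\,dx\ls\e^{\lambda}$. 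The stated convergences then follow: the bound on $\mh(\rho^\e|\rho)$ gives $\rho^\e\to\rho$ in $L^\infty(0,T;L^p)$ for $p\in(1,\gamma)$; combining it with the bound on $\rho^\e|w^\e|^2$ controls $\rho^\e u^\e-\rho u=\rho^\e w^\e+(\rho^\e-\rho)u$ in $L^\infty(0,T;L^q)$ for $q\in[1,\tfrac{d+2}{d+1})$; and these, with the kinetic-defect bound of the second step, give $\intr v\otimes v\,f^\e\,dv\to\rho u\otimes u+\rho^\gamma\mathbb{I}_d$ in $L^1(\om\times(0,T))$.
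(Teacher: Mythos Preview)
Your overall plan is the paper's: relative entropy plus Gr\"onwall, with the relative-flux term controlled by $\|\nabla u\|_{L^\infty}\int\me$, the kinetic pressure defect bounded at order $\sqrt\e$ via the Berthelin--Vasseur lemma, the density-defect alignment term absorbed into $\mh(\rho^\e|\rho)$ through a Hardy--Littlewood--Sobolev/Minkowski estimate (this is indeed where the ranges of $\alpha$ arise), and the convergences deduced at the end exactly as you describe. Two points, however, deserve care.

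First, your identity $\frac{d}{dt}\int\me=\int(DE(U^\e)-DE(U))\,\pa_t U^\e-\int D^2E(U)\,\pa_t U\,(U^\e-U)$ is correct, but you cannot simultaneously (i) pair the kinetic defect with $\nabla u$ and (ii) pair $F^\e$ with $w^\e$ to produce the $w^\e$-dissipation. Both claims require expanding $\int DE(U^\e)\,\pa_t U^\e$, and the flux and defect pieces of that expansion are not separately justified for weak $U^\e$ (they would generate $\nabla u^\e$). The paper instead keeps $\int\pa_t E(U^\e)$ unexpanded, integrates it to $\int E(U^\e)(t)-\int E(U^\e_0)$, bounds this together with the full $u^\e$-dissipation $\frac12\int_0^t\!\iint\phi^\e_\alpha|u^\e(x)-u^\e(y)|^2\rho^\e\rho^\e$ via the minimization principle $E(U^\e)\le\int H(f^\e)\,dv$ and the free energy inequality \eqref{free_ineq}, and pairs the remaining defect with $DE(U)$ only; this is exactly where (H1) enters. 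You should organize the computation this way.

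Second, once you do so the alignment remainders are not your $\mathcal R_{\rm den},\mathcal R_{\rm reg}$ but the paper's $I_6,I_7$: the density-defect term carries the \emph{singular} kernel $\phi_\alpha$ (not $\phi^\e_\alpha$), and the regularization error is $\frac12\iint(\phi^\e_\alpha-\phi_\alpha)(u(x)-u(y))\cdot(u^\e(y)-u^\e(x))\rho^\e(x)\rho^\e(y)$, which is controlled by Cauchy--Schwarz against the alignment dissipation supplied by \eqref{free_ineq}. The specific scale $\delta=\e^{\beta/(\alpha+2)}$ and the exponents $\tfrac\beta2,\tfrac{\alpha\beta}4,\tfrac{\alpha\beta}{2(\alpha+2)}$ arise from \emph{that} pairing; your $\mathcal R_{\rm reg}$, which instead carries $w^\e(x)\rho(y)$ with $\rho\in L^\infty$, would yield a different (dimension-dependent) rate and does not reproduce the stated $\lambda$.
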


\subsection{Remarks}\label{sec_rem}

We give several remarks on Theorem \ref{thm:hydrolimit}.

\begin{enumerate}[(i)]

\item Noticing from \eqref{phi_per}, we assume
\[
\intt |x|^r \phi_\alpha(x)\,dx < \infty \quad \iff \quad \alpha -r < d.
\]
\item Since $\alpha \in (0,2)$ and $\e \in (0,1)$, if we choose $\beta = \frac2\alpha$, then 
\[
\sup_{t \in [0,T] }\into \left(\rho^\e|u^\e-u|^2 + \mh(\rho^\e| \rho)\right) dx \leq C \e^{\frac1{\alpha+2}}.
\]

\item  When $\om =\T^d$, by monotonicity of $L^p$-norm, we get
	\begin{align*}
		&\rho^\e \to\rho \quad \mbox{a.e.} \quad and \quad in \ L^\infty(0,T;L^p(\om))\quad \mbox{with $p \in [1,\gamma)$} \quad \mbox{and}\\
		&\rho^\e u^\e \to\rho u \quad \mbox{a.e.} \quad and \quad in \ L^\infty(0,T;L^q(\om)) \quad \mbox{with $q \in [1, \frac{d+2}{d+1})$}
	\end{align*}
	as $\e\to0$.
	
\item  We summarize the current state of the hydrodynamic limit for the BGK-alignment model \eqref{main_eq} toward the isentropic EAS \eqref{main_pE}.
\begin{center}
\begin{tabular}{ |c |c|c| } 
 \hline
  & $d = 1$ & $d \geq 2$ \\
 \hline 
 $\om = \T^d$  & $\alpha \in (0,\frac32)$ & $\alpha \in (0,2)$ \\ 
 \hline 
  $\om = \R^d$ &   & $\alpha \in (1,2)$ \\ 
 \hline
\end{tabular}
\end{center}
Note that for $d\geq 2$, $\alpha \in (0,2)$, and $\tilde \alpha \in (1,2)$, if we consider a communication weight function $\phi \in \mc^1(0,\infty)$ satisfying
\[
\phi(r) = \left\{ \begin{array}{ll}
\displaystyle \frac1{r^\alpha}  & \textrm{if $r \in (0,R)$}\\[3mm]
\displaystyle \frac1{r^{\tilde\alpha}} & \textrm{if $r  > 2R$}
  \end{array} \right.
\]
for some $R>0$, then our main theorem can be directly applied to the above case.

At this moment, we were not able to resolve the case $d=1$ and $\om = \R$. The main difficulty arises from the lower bound estimate on the relative entropy in the one-dimensional case, see Lemma \ref{lem_lower_h} and Remark \ref{rmk_lower_h} for more detailed discussions. 

\item Currently, we were unable to cover the pressure law $p(\rho) = \rho^\gamma$ with $\gamma \in (1,1+\frac2d)$. One could consider 
\bq\label{eq_pos}
\pa_t f^\e + v\cdot\nabla_x f^\e  + \nabla_v \cdot \lt(F_{\phi^\e_\alpha}[f^\e]f^\e\rt)  = \frac{1}{\e}Q[f^\e], 
\eq
where $Q[f] = M[f] - f$ with
\[
M[f] = c_{\gamma,d}\left(\frac{2\gamma}{\gamma-1}\rho_f^{\gamma-1}-|v-u_f|^2\right)^{n/2}_+, \quad \gamma \in (1, 1 + \frac2d)
\]
and pass to the limit $\e \to 0$ to have the isentropic EAS with $\gamma \in (1, 1+ \frac2d)$. Following \cite{Bou99}, in this case, the kinetic entropy for the above kinetic equation would be defined by
\[
H(f) = \frac{|v|^2}2 f + \frac1{2c_{\gamma,d}^{2/n}} \frac{f^{1 + 2/n}}{1 + 2/n},
\]
where $c_{\gamma,d}$ and $n$ are given as in \eqref{para}. Then we get from \cite{Bou99} that
\[
\intr H(M[f])\,dv \leq \intr H(f)\,dv,
\]
and this gives some dissipation from the BGK operator $Q$. However, in this approach, we could not find an appropriate way of having non-increasing of $\intor H(f^\e)\,dxdv$ due to the present of the non-local alignment force $F[f]$ even with regular communication weights. 

%some bound on $\|f^\e\|_{L^p}$ with $p=1 + 2/n$ uniformly in $\e$ due to the singular communication weights. 
%On the other hand, when the communication weight is bound, the isentropic EAS with $\gamma \in [1,1 + \frac2d]$ can be rigorously derived. We give some details of that in Section \ref{sec_hydro2}. 
\end{enumerate}

\subsection{Organization of the paper}

The rest of the paper is organized as follows. In Section \ref{sec_pre}, we provide a uniform-in-$\e$ bound estimate on the kinetic energy with appropriate dissipations. We also present lower bound estimates on  $\mh(\bar\rho | \rho)$. Those two estimates will be significantly used in the relative entropy estimates later. In order to emphasize our main result on the derivation of \eqref{main_pE} from \eqref{main_eq}, we give the details of the proof of Theorem \ref{thm:hydrolimit} in Section \ref{sec_hydro}. Finally, in Section \ref{sec_weak}, we construct the global-in-time weak solutions to the BGK-alignment model \eqref{main_eq} satisfying the required entropy inequality \eqref{free_ineq}.

%%%%%%%%%%%%%%%%%%%%%%%%%%%%%%%%%%%%%%%%%%%%%%%%%%%%%%%%%%%%%%
%
%
%.  
%.  
%
%
%%%%%%%%%%%%%%%%%%%%%%%%%%%%%%%%%%%%%%%%%%%%%%%%%%%%%%%%%%%%%%%%%%%%
%\subsection{Isentropic EAS with $p(\rho)=\rho^\gamma$ with $\gamma \in [1,1 + \frac2d)$}

%%%%%%%%%%%%%%%%%%%%%%%%%%%%%%%%%%%%%%%%%%%%%%%%%%%%%%%%%%%%%%
%
%
%.  
 \section{Preliminaries}\label{sec_pre}
%
%
%%%%%%%%%%%%%%%%%%%%%%%%%%%%%%%%%%%%%%%%%%%%%%%%%%%%%%%%%%%%%%%%%%%%

\subsection{Uniform kinetic energy estimate on $f^\e$}\label{pre_uni}

In this subsection, we provide the uniform-in-$\e$ bound estimate on the kinetic energy for \eqref{main_eq}. For this, we consider the kinetic energy:
\[
H(f)=\frac{|v|^2}{2}f.  
\]
Multiplying \eqref{main_eq} by $|v|^2$ and integrating over $x$ and $v$, we get
\begin{align}\label{kinetic energy bound}\begin{split}
\frac{d}{dt}\iint_{\Omega\times\mathbb{R}^d}\frac{|v|^2}{2} f^\e\,dxdv     &=\iint_{\Omega\times\mathbb{R}^d} v \cdot \lt(F_{\phi^\varepsilon_\alpha}[f^\e]f^\e\rt)\,dxdv+ \frac{1}{\e} \iint_{\Omega\times\mathbb{R}^d}\frac{|v|^2}{2}(M[f^\e]-f^\e)\,dxdv.
\end{split}\end{align}
Here we use the same argument as in \cite{CKpre22} to get 
\begin{align*}
\intor \left(v\cdot F_{\phi^\varepsilon_\alpha}[f^\varepsilon] \right)f^\varepsilon\,dxdv&\le-\frac{1}{2}\intoo \phi^\varepsilon_\alpha(x-y)\left|u^\e(x)-u^\e(y)\right|^2 \rho^\varepsilon(x)\rho^\varepsilon(y)\,dxdy.
\end{align*}
This combined with \eqref{kinetic energy bound} gives 
\begin{align*} 
\begin{aligned}
&\frac{1}{2}\intor |v|^2f^\e\,dxdv+\frac{1}{\varepsilon}\int_0^t\intor|v|^2(f^\e-M[f^\e])\,dxdvds\cr
&\quad+\frac{1}{2}\int_0^t \intoo \,\phi^\varepsilon_\alpha(x-y) |u^\e(x)-u^\e(y)|^2\rho^\varepsilon (x)\rho^\varepsilon (y)\,dxdyds
\leq \frac{1}{2}\intor |v|^2f_0^\e\,dxdv.
\end{aligned}
\end{align*}
Note from \cite{Bou99} that for any $f$ satisfying 
$$
\intr f+H(f )\,dv < \infty,
$$
the minimization principle holds true:
\begin{equation}\label{mini} 
\intr  H(M[f] )\,dv\le \intr  H(f )\,dv.
\end{equation} 
This together with \eqref{kinetic energy bound} gives the bound of kinetic entropy:
 \begin{align}\label{energy bound2}
 \begin{aligned}
 &\frac{1}{2}\intor |v|^2f^\e\,dxdv+\frac{1}{2}\int_0^t \intoo \,\phi^\varepsilon_\alpha(x-y) |u^\e(x)-u^\e(y)|^2\rho^\varepsilon (x)\rho^\varepsilon (y)\,dxdyds\cr
 &\quad
 \le \frac{1}{2}\intor |v|^2f_0^\e\,dxdv.
 \end{aligned}
 \end{align}

 %%%%%%%%%%%%%%%%%%%%%%%%%%%%%%%%%%%%%%%%%%%%%%%%%%%%%%%%%%%%%%
%
%
%.  
%.  
%
%
%%%%%%%%%%%%%%%%%%%%%%%%%%%%%%%%%%%%%%%%%%%%%%%%%%%%%%%%%%%%%%%%%%%%

\subsection{Lower bound estimate on $\mh(\bar\rho | \rho)$}

\begin{lemma}\label{lem_lower_h} Let $\mh(\bar\rho | \rho)$ be given as \eqref{def_h}. Suppose that 
\[
\into \bar\rho\,dx = \into \rho\,dx = 1.
\]
Then there exists $C>0$ independent of $\bar\rho$ and $\rho$ such that 
\[
\into \mh(\bar\rho| \rho)\,dx  \geq C\left\{ \begin{array}{ll}
\|\bar\rho - \rho\|_{L^{\frac{2}{3-\gamma}}}^{2} & \textrm{if $d \geq 2$}\\[4mm]
\|\bar\rho - \rho\|_{L^2(\rho)}^2 & \textrm{if $d=1$}
  \end{array} \right..
\]
In particular, if $\om = \T^d$, $d=1$, and $\rho \geq \rho_m > 0$ for some constant $\rho_m$, then 
\[
\into \mh(\bar\rho| \rho)\,dx \geq  C\rho_m\|\bar\rho - \rho\|_{L^2}^2.
\]
\end{lemma}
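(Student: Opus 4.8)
The functional $\mh(\bar\rho|\rho)$ defined in \eqref{def_h} is the Bregman divergence of the strictly convex free energy $F(\rho)=\frac{1}{\gamma-1}\rho^\gamma$, namely $\mh(\bar\rho|\rho)=F(\bar\rho)-F(\rho)-F'(\rho)(\bar\rho-\rho)$ with $F''(\rho)=\gamma\rho^{\gamma-2}$. The plan is to establish sharp \emph{pointwise} lower bounds on $\mh$ in terms of $w:=\bar\rho-\rho$, and then integrate, using the normalization $\into\bar\rho\,dx=\into\rho\,dx=1$ to turn the resulting weighted integrals into the stated $L^p$ norms.

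I would treat the one-dimensional case ($\gamma=3$) by direct computation. Expanding \eqref{def_h} and collecting terms yields the exact factorization
\[
\mh(\bar\rho|\rho)=\tfrac12\,w^2(2\rho+\bar\rho).
\]
Since $\bar\rho\ge0$ this gives the pointwise inequality $\mh(\bar\rho|\rho)\ge\rho\,w^2$, and integrating produces $\into\mh(\bar\rho|\rho)\,dx\ge\|\bar\rho-\rho\|_{L^2(\rho)}^2$. The $\T^d$ ($d=1$) refinement is then immediate: if $\rho\ge\rho_m$ then $\into\rho\,w^2\,dx\ge\rho_m\into w^2\,dx=\rho_m\|\bar\rho-\rho\|_{L^2}^2$.

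For $d\ge2$ one has $\gamma=1+\frac2d\in(1,2]$, so $\gamma-2\le0$, and I would split $\om=\om_1\cup\om_2$ with $\om_1:=\{\bar\rho\le2\rho\}$ and $\om_2:=\{\bar\rho>2\rho\}$. On $\om_1$, Taylor's theorem gives $\mh(\bar\rho|\rho)=\frac\gamma2\xi^{\gamma-2}w^2$ for some $\xi$ between $\rho$ and $\bar\rho$; since $\gamma-2\le0$ and $\xi\le2\rho$ there, this yields $\mh(\bar\rho|\rho)\gs\rho^{\gamma-2}w^2$. On $\om_2$, setting $t=\bar\rho/\rho>2$ and using that $t^\gamma+(\gamma-1)-\gamma t\gs t^\gamma$ on $[2,\infty)$, one gets $\mh(\bar\rho|\rho)\gs\bar\rho^\gamma$. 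With $p:=\frac{2}{3-\gamma}$, on $\om_1$ I apply Hölder to $|w|^p=(|w|^2\rho^{\gamma-2})^{p/2}\rho^{-(\gamma-2)p/2}$ with exponents $\frac2p$ and $\frac{2}{2-p}$, while on $\om_2$, where $|w|\le\bar\rho$, I interpolate $\bar\rho^p=(\bar\rho^\gamma)^\theta\bar\rho^{1-\theta}$ with $\theta:=\frac{p-1}{\gamma-1}$.

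The crux, and the step I expect to require the most care, is that the exponent $p=\frac{2}{3-\gamma}$ is exactly the value reconciling the two regimes. A short computation shows $(2-\gamma)\frac{p}{2-p}=1$, so the leftover density weight on $\om_1$ is precisely $\rho^1$ and the mass bound gives $\int_{\om_1}|w|^p\ls(\into\mh\,dx)^{p/2}$; simultaneously $\theta=\frac{p-1}{\gamma-1}=\frac p2$, so with $\int_{\om_2}\bar\rho^\gamma\ls\into\mh\,dx$ and $\int_{\om_2}\bar\rho\le1$ one obtains $\int_{\om_2}|w|^p\ls(\into\mh\,dx)^{p/2}$ as well; one also checks $p\le\gamma$, i.e. $(\gamma-1)(\gamma-2)\le0$, which guarantees $\theta\in[0,1]$ (the endpoint $\gamma=2$ being the trivial-weight case $p=2$). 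Summing the two contributions gives $\into|w|^p\,dx\ls(\into\mh\,dx)^{p/2}$, and raising to the power $\frac2p$ yields $\|\bar\rho-\rho\|_{L^{2/(3-\gamma)}}^2\ls\into\mh(\bar\rho|\rho)\,dx$, completing the proof.
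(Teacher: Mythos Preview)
Your argument is correct. For $d=1$ your explicit factorization $\mh(\bar\rho|\rho)=\tfrac12 w^2(2\rho+\bar\rho)$ is exactly equivalent to the paper's integral-remainder Taylor formula and yields the same bound (in fact with the slightly better constant $1$ instead of $\tfrac12$).

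For $d\ge 2$ your route is genuinely different. The paper uses a single pointwise bound valid everywhere,
\[
\mh(\bar\rho|\rho)\ge \tfrac\gamma2\,\min\{\bar\rho^{\gamma-2},\rho^{\gamma-2}\}\,w^2=\tfrac\gamma2\,\max\{\bar\rho,\rho\}^{-(2-\gamma)}\,w^2,
\]
and then applies a single H\"older splitting via the identity $\min\{\bar\rho^{-(2-\gamma)},\rho^{-(2-\gamma)}\}\cdot\max\{\bar\rho^{2-\gamma},\rho^{2-\gamma}\}=1$, with the leftover factor controlled by $\int_\Omega\max\{\bar\rho,\rho\}\,dx\le 2$. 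Your approach instead splits $\Omega$ into the near-diagonal region $\{\bar\rho\le 2\rho\}$, where Taylor gives $\mh\gtrsim\rho^{\gamma-2}w^2$, and the far region $\{\bar\rho>2\rho\}$, where convexity gives $\mh\gtrsim\bar\rho^\gamma$; you then run separate H\"older/interpolation steps on each piece and verify that $p=\tfrac{2}{3-\gamma}$ is precisely the exponent that makes both residual weights collapse to first moments. Your computations check out (including $\theta=\tfrac{p-1}{\gamma-1}=\tfrac p2$ and the constraint $p\le\gamma$ ensuring $\theta\le 1$). The paper's argument is more compact---one bound, one H\"older---while yours is more modular and makes transparent why the exponent $\tfrac{2}{3-\gamma}$ is forced; both are complete.
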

\begin{proof}
Note that if $d \geq 2$, then $\gamma = 1 + \frac2d \leq 2$, and in this case, by Taylor's theorem, we readily see
\[%\begin{equation}\label{rel_ent_l2}
\mh(\bar\rho| \rho) \geq \frac{\gamma}{2} \min\lt\{\frac{1}{\bar\rho^{2-\gamma}}, \frac{1}{\rho^{2-\gamma}} \rt\}(\rho - \bar\rho)^2,
\]%\end{equation}
and  
\begin{align*}
\into \left|\rho^\e-\rho\right|^{\frac{2}{3-\gamma}}\,dx&=\into \min\left\{\frac{1}{\bar\rho^{2-\gamma}},\frac{1}{\rho^{2-\gamma}} \right\}^{\frac{1}{3-\gamma}}\max\left\{\bar\rho^{2-\gamma},\rho^{2-\gamma} \right\}^{\frac{1}{3-\gamma}}\left|\rho^\e-\rho\right|^{\frac{2}{3-\gamma}}\,dx\cr
&\le \left(\into \min\lt\{\frac{1}{\bar\rho^{2-\gamma}}, \frac{1}{\rho^{2-\gamma}} \rt\}\left|\rho - \bar\rho\right|^2\,dx \right)^{\frac{1}{3-\gamma}} \left(\into \max\left\{\bar\rho,\rho \right\}\,dx \right)^{\frac{2-\gamma}{3-\gamma}}. 
\end{align*}
This yields
\begin{equation*}%\label{3-gamma}
\|\bar\rho - \rho\|_{L^{\frac{2}{3-\gamma}}}^{2} \leq C\into \mh(\bar\rho| \rho)\,dx
\end{equation*}
for some $C>0$.
 
On the other hand, if $d=1$, then $\gamma = 3$, and by Taylor's theorem we find
\[
\mh(\bar\rho | \rho) = \frac{3}{2}\int_0^1 (1-\theta)(\rho + \theta(\bar\rho - \rho))(\bar\rho - \rho)^2\,d\theta.
\]
In particular, this implies
\[
\mh(\bar\rho | \rho) \geq \frac32 \rho (\bar\rho - \rho)^2 \int_0^1 (1-\theta)^2\,d\theta = \frac12 \rho(\bar\rho - \rho)^2.
\]
This completes the proof.
\end{proof}

\begin{remark}\label{rmk_lower_h} For the one-dimensional case, we obtain
\[
\|\bar\rho - \rho\|_{L^3}^2 \leq C\lt(\into \mh(\bar\rho| \rho)\,dx  \rt)^{\frac23}
\]
for some $C>0$. Indeed, if $\bar\rho \geq \rho$, then
\[
\int_0^1 (1-\theta)(\rho + \theta(\bar\rho - \rho))(\bar\rho - \rho)^2\,d\theta \geq (\bar\rho - \rho)^3 \int_0^1 (1-\theta)\theta\,d\theta = \frac{(\bar\rho - \rho)^3 }6.
\]
On the other hand, if $\bar\rho \leq \rho$, then
\[
\int_0^1 (1-\theta)(\rho + \theta(\bar\rho - \rho))(\bar\rho - \rho)^2\,d\theta \geq  \rho (\bar\rho - \rho)^2 \int_0^1 (1-\theta)^2\,d\theta = \frac{\rho(\bar\rho - \rho)^2}3.
\]
Thus we obtain
\begin{align*}
\into |\bar\rho - \rho|^3\,dx &= \int_{\bar\rho \geq \rho} |\bar\rho - \rho|^2 (\bar\rho - \rho)\,dx + \int_{\bar\rho \leq \rho} |\bar\rho - \rho|^2 (\rho - \bar\rho)\,dx\cr
&\leq \int_{\bar\rho \geq \rho} |\bar\rho - \rho|^2 (\bar\rho - \rho)\,dx + \int_{\bar\rho \leq \rho} |\bar\rho - \rho|^2 \rho \,dx \cr
&\leq C\into \mh(\bar\rho| \rho)\,dx
\end{align*}
for some $C>0$.

It is worth noticing that in the estimates of relative entropy, it is required to obtain
\bq\label{hope}
\|\bar\rho - \rho\|_{L^p}^2 \leq C\into \mh(\bar\rho| \rho)\,dx 
\eq
for some $p \in [1,\gamma]$. As stated in Lemma \ref{lem_lower_h}, we have the above estimate with $p=2$ when $\om = \T^d$ by assuming that $\rho \geq \rho_m > 0$. However, in the case $\om =\R$, we cannot assume that $\rho$ has a positive bound due to $\rho \in L^1(\R)$. This causes a great difficulty in having \eqref{hope} and this is the main reason why we could not handle the case $\om = \R$.

\end{remark}

%%%%%%%%%%%%%%%%%%%%%%%%%%%%%%%%%%%%%%%%%%%%%%%%%%%%%%%%%%%%%%
%
%
%.  
%.  
%
%
%%%%%%%%%%%%%%%%%%%%%%%%%%%%%%%%%%%%%%%%%%%%%%%%%%%%%%%%%%%%%%%%%%%%

\section{Hydrodynamic limit from the BGK-alignment model to isentropic EAS}\label{sec_hydro}

\subsection{Relative entropy estimate}
We now provide the estimate of the relative entropy functional $\mathcal{E}(U^\e|U)$.
\begin{lemma}\label{lem_rel}
The relative entropy $\me(U^\e|U)$ defined in \eqref{def_rel} satisfies the following estimate:
\begin{align}
\begin{aligned}\label{D-5}
&\into \me(U^\e|U)\,dx \cr
&\quad \leq \into \me(U_0^\e|U_0)\,dx + \into E(U^\e) - E(U^\e_0)\,dx  \cr
&\qquad +\frac12\int_0^t \intoo \rho^\e(x) \rho^\e(y)\phi_\alpha^\e(x-y)|u^\e(x) - u^\e(y)|^2\,dxdyds \cr
&\qquad - \int_0^t \into \nabla (DE(U)):A(U^\e|U)\,dxds  - \int_0^t \into DE(U)\cdot (\pa_t U^\e + \nabla \cdot A(U^\e)- F^\e(U^\e))\,dxds \cr
&\qquad   + \int_0^t \intoo \rho^\e(x) (\rho(y) - \rho^\e(y))\phi_\alpha(x-y) (u^\e(x) - u(x))\cdot (u(y) - u(x))\,dxdyds\cr
&\qquad  + \frac{1}{2}\int_0^t\intoo (\phi^\e_\alpha(x-y)-\phi_\alpha(x-y)) (u(x)-u(y)) \cdot (u^\e(y) - u^\e(x)) \rho^\e(x) \rho^\e(y)\,dxdyds,
\end{aligned}
\end{align}
where   $A:B := \sum_{i=1}^m \sum_{j=1}^n a_{ij} b_{ij}$ for $A, B \in \R^{m\times n}$ and $A(U^\e|U)$ is the relative flux functional given by
\[
A(U^\e|U) := A(U^\e) - A(U) - DA(U)(U^\e-U).
\]
\end{lemma}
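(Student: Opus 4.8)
The plan is to run the relative entropy (modulated energy) computation in the conservative variables, comparing the classical solution $U$ of \eqref{main_pE} with the macroscopic observables $U^\e=(\rho^\e,m^\e)$ of the weak solution $f^\e$. The first step is to split
\[
\into\me(U^\e|U)\,dx=\into E(U^\e)\,dx-\into\big(E(U)+DE(U)(U^\e-U)\big)\,dx,
\]
leaving the first integral untouched so that $\into E(U^\e)-E(U^\e_0)\,dx$ appears verbatim on the right of \eqref{D-5}. It then suffices to differentiate $\into(E(U)+DE(U)(U^\e-U))\,dx$ in time and integrate over $[0,t]$; the value at $s=0$ is exactly $\into\me(U_0^\e|U_0)\,dx$. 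A direct computation, in which the first-order terms $\into DE(U)\cdot\pa_s U\,dx$ cancel, gives
\[
\frac{d}{ds}\into\big(E(U)+DE(U)(U^\e-U)\big)\,dx=\into(U^\e-U)^\top D^2E(U)\,\pa_s U\,dx+\into DE(U)\cdot\pa_s U^\e\,dx.
\]

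Into this identity I would substitute $\pa_s U=-\nabla\cdot A(U)+F(U)$, which holds pointwise since $U$ is classical, and, for $\pa_s U^\e$, the weak form of the moment system $\pa_s U^\e+\nabla\cdot A(U^\e)-F^\e(U^\e)=\nabla\cdot\mathcal R^\e$, where $\mathcal R^\e$ is the kinetic stress defect with momentum block $\intr(u^\e\otimes u^\e-v\otimes v)f^\e\,dv+C_d(\rho^\e)^\gamma\mathbb{I}_d$. The purely convective contributions, namely $\into(U^\e-U)^\top D^2E(U)\nabla\cdot A(U)\,dx$ together with $\into DE(U)\cdot\nabla\cdot A(U^\e)\,dx$, reorganize after one integration by parts into the relative flux term $-\into\nabla(DE(U)):A(U^\e|U)\,dx$; this is the standard entropy--flux (Dafermos--DiPerna) cancellation and relies only on the symmetry of $D^2E(U)\,DA_k(U)$. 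The stress defect supplies precisely the residual $-\into DE(U)\cdot\nabla\cdot\mathcal R^\e\,dx$, which I keep in the compact form $-\into DE(U)\cdot(\pa_s U^\e+\nabla\cdot A(U^\e)-F^\e(U^\e))\,dx$ of \eqref{D-5} for later treatment.

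The delicate part is the two alignment contributions $-\into(U^\e-U)^\top D^2E(U)F(U)\,dx$ and $-\into DE(U)\cdot F^\e(U^\e)\,dx$. Computing $D^2E(U)F(U)$ explicitly, only its momentum block survives and the first contribution collapses to $-\intoo\rho^\e(x)(u^\e(x)-u(x))\cdot\phi_\alpha(x-y)(u(y)-u(x))\rho(y)\,dxdy$, while the second equals $-\intoo\rho^\e(x)\rho^\e(y)\phi^\e_\alpha(x-y)\,u(x)\cdot(u^\e(y)-u^\e(x))\,dxdy$. I would then add and subtract the alignment dissipation $\tfrac12\intoo\rho^\e(x)\rho^\e(y)\phi^\e_\alpha(x-y)|u^\e(x)-u^\e(y)|^2\,dxdy$ (the third term of \eqref{D-5}), splitting the density weight as $\rho(y)=\rho^\e(y)+(\rho(y)-\rho^\e(y))$ and the kernel as $\phi_\alpha=\phi^\e_\alpha-(\phi^\e_\alpha-\phi_\alpha)$, and symmetrize each resulting double integral in $x\leftrightarrow y$ (legitimate since $\rho^\e(x)\rho^\e(y)$ and the kernels are symmetric while the velocity differences are antisymmetric). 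This is designed to leave exactly the modulation cross term carrying $(\rho(y)-\rho^\e(y))$ and the regularization remainder carrying $\phi^\e_\alpha-\phi_\alpha$, i.e.\ the last two integrals of \eqref{D-5}. The dissipation term is kept with its favourable sign precisely so that, when $\into E(U^\e)-E(U_0^\e)\,dx$ is later estimated through the free energy inequality \eqref{free_ineq}, the genuine alignment dissipation it produces is cancelled.

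I expect the main obstacle to be the bookkeeping in this force decomposition: there are several algebraically equivalent ways to distribute the kernels $\phi_\alpha,\phi^\e_\alpha$ and the densities among the terms, and only the particular symmetrization that isolates the dissipation against the \emph{dissipated} weight $\phi^\e_\alpha$ while confining the mismatch $\phi^\e_\alpha-\phi_\alpha$ to a single remainder will be usable in the subsequent singular-integral estimates; keeping the correct signs and arguments throughout is where the computation is most error-prone. A secondary but necessary point is the rigorous justification of using $DE(U)$ as a test function in the weak formulation of the $U^\e$-system and of every integration by parts (no boundary terms on $\T^d$; decay on $\R^d$ guaranteed by the finite energy and second-moment bounds). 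Since $f^\e$ is only a weak solution, the resulting relation should be read together with the free energy inequality \eqref{free_ineq}, which is the source of the inequality---rather than an identity---in \eqref{D-5}.
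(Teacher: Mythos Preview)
Your approach is essentially the paper's: split off $\into E(U^\e)\,dx$, differentiate the remainder, invoke the Dafermos--DiPerna cancellation for the convective part, and symmetrize the alignment contributions. The explicit forms you write for $D^2E(U)F(U)(U^\e-U)$ and $DE(U)\cdot F^\e(U^\e)$ match the paper verbatim.

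One point needs correction. You attribute the inequality sign in \eqref{D-5} to the free energy inequality \eqref{free_ineq}, but at this stage the estimate is purely algebraic: the symmetrization of the force term $I$ does \emph{not} produce only the three integrals you name. The full identity (see the paper, which cites \cite{CKpre22}) yields two additional terms,
\[
-\tfrac12\intoo \rho^\e(x)\rho^\e(y)\phi^\e_\alpha(x-y)\,\big|(u^\e-u)(x)-(u^\e-u)(y)\big|^2\,dxdy
\]
and
\[
-\tfrac12\intoo (\phi_\alpha-\phi^\e_\alpha)(x-y)\,|u(x)-u(y)|^2\rho^\e(x)\rho^\e(y)\,dxdy,
\]
both of which are nonpositive (the second because $\phi_\alpha\ge\phi^\e_\alpha$). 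Dropping these two terms is the source of the ``$\le$'' in \eqref{D-5}. The free energy inequality \eqref{free_ineq} is used only \emph{later}, in the proof of Theorem~\ref{thm:hydrolimit}, to control $\into E(U^\e)-E(U^\e_0)\,dx$ together with the third line of \eqref{D-5}. So your bookkeeping plan for the force decomposition should explicitly account for these two extra sign-definite pieces; otherwise the algebra will not close.
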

\begin{proof}
It follows from the definition of relative entropy functional \eqref{def_rel}, \cite[Lemma 4.1]{KMT15}, and \cite[Proof of Proposition 4.2]{KMT15}  that
\[
\begin{aligned}
\frac{d}{dt}\into \me(U^\e|U)\,dx & = \into \partial_t E(U^\e)\,dx - \into DE(U)\cdot (\pa_t U^\e + \nabla \cdot A(U^\e)- F^\e(U^\e))\,dx \cr
&\quad  - \into \nabla (DE(U)):A(U^\e|U)\,dx \underbrace{-\into D^2E(U)F(U)(U^\e-U) + DE(U)\cdot F^\e(U^\e)\,dx}_{=:I}.
\end{aligned}
\]
On the other hand, straightforward computation gives
$$\begin{aligned}
&D^2E(U)F(U)(U^\e-U) \cr
&\quad = \rho^\e(x) (u^\e(x) - u(x))\cdot\lt( \into \phi_\alpha(x-y)(u(y) - u(x)) \rho(y)\,dy \rt) 
\end{aligned}$$
and
\[
DE(U)\cdot F^\e(U^\e) = \rho^\e u \cdot \lt(  \into \phi^\e_\alpha(x-y)(u^\e(y) - u^\e(x)) \rho^\e(y)\,dy  \rt).
\]
We then use the same argument as in \cite{CKpre22} to have
\begin{align*}
I &= \frac12\intoo \rho^\e(x) \rho^\e(y)\phi_\alpha^\e(x-y)|u^\e(x) -  u^\e(y)|^2\,dxdy\cr
&\quad -\frac12\intoo \rho^\e(x) \rho^\e(y)\phi_\alpha^\e(x-y)|(u^\e(x) - u(x)) - (u^\e(y) - u(y))|^2 dxdy\cr
&\quad - \frac{1}{2}\int_0^t\intoo (\phi_\alpha(x-y)-\phi^\e_\alpha(x-y))|u(x)-u(y)|^2\rho^\e(x)\rho^\e(y)\,dxdyds\cr
&\quad + \intoo\rho^\e(x) (\rho(y) - \rho^\e(y))\phi_\alpha(x-y) (u^\e(x) - u(x))\cdot (u(y) - u(x))\,dxdy \cr
&\quad + \frac{1}{2}\intoo (\phi^\e_\alpha(x-y)-\phi_\alpha(x-y)) (u(x)-u(y)) \cdot (u^\e(y) - u^\e(x)) \rho^\e(x)\rho^\e(y)\,dxdy\cr
&\leq \frac12\intoo \rho^\e(x) \rho^\e(y)\phi_\alpha^\e(x-y)|u^\e(x) -  u^\e(y)|^2\,dxdy\cr
&\quad + \intoo\rho^\e(x) (\rho(y) - \rho^\e(y))\phi_\alpha(x-y) (u^\e(x) - u(x))\cdot (u(y) - u(x))\,dxdy \cr
&\quad + \frac{1}{2}\intoo (\phi^\e_\alpha(x-y)-\phi_\alpha(x-y)) (u(x)-u(y)) \cdot (u^\e(y) - u^\e(x)) \rho^\e(x)\rho^\e(y)\,dxdy,
\end{align*}
due to $\phi_\alpha \geq \phi_\alpha^\e \geq 0$. This together with integrating the resulting inequality with respect to time concludes the desired result.
\end{proof}

\subsection{Proof of Theorem \ref{thm:hydrolimit}}
We first recall from \cite{CKpre22} an auxiliary lemma which gives the error bound on $\phi_\alpha - \phi^\e_\alpha$.
\begin{lemma}\label{phi-est}
	Let $\alpha \in (0,2)$. Then
	\[
	\phi_\alpha(x)-\phi^\e_\alpha(x)\le \frac{\alpha \e^\beta}{2}\frac{\phi^\e_\alpha(x)}{|x|^2} \quad \mbox{for} \quad x \in \R^d \setminus \{0\}.
	\]
\end{lemma}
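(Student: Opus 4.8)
The statement to prove is Lemma 3.3 (labeled \verb|phi-est|): for $\alpha \in (0,2)$,
\[
\phi_\alpha(x) - \phi^\e_\alpha(x) \le \frac{\alpha \e^\beta}{2}\frac{\phi^\e_\alpha(x)}{|x|^2}, \quad x \in \R^d\setminus\{0\}.
\]

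Recall that $\phi_\alpha(r) = r^{-\alpha}$ and $\phi^\e_\alpha(r) = (\e^\beta + r^2)^{-\alpha/2} = \phi_\alpha(\sqrt{\e^\beta + r^2})$. Since both functions are radial, it suffices to work with the scalar variable $r = |x| > 0$. The plan is to fix $r$ and view the difference as a function of the regularization parameter, then apply the mean value theorem (equivalently, the fundamental theorem of calculus) in a cleverly chosen variable.

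The cleanest approach is to introduce $g(s) := s^{-\alpha/2}$ and write $\phi_\alpha(r) = g(r^2)$ and $\phi^\e_\alpha(r) = g(r^2 + \e^\beta)$. Then
\[
\phi_\alpha(r) - \phi^\e_\alpha(r) = g(r^2) - g(r^2 + \e^\beta) = -\int_{r^2}^{r^2 + \e^\beta} g'(s)\,ds,
\]
where $g'(s) = -\frac\alpha2 s^{-\alpha/2 - 1} \le 0$, so the integrand $-g'(s) = \frac\alpha2 s^{-\alpha/2-1}$ is nonnegative and \emph{decreasing} in $s$. Hence on the interval $[r^2, r^2 + \e^\beta]$ it is bounded above by its value at the left endpoint $s = r^2$, giving
\[
\phi_\alpha(r) - \phi^\e_\alpha(r) \le \e^\beta \cdot \lt(-g'(r^2)\rt) = \e^\beta \cdot \frac\alpha2 (r^2)^{-\alpha/2 - 1} = \frac{\alpha\e^\beta}{2}\, \frac{r^{-\alpha}}{r^2} = \frac{\alpha\e^\beta}{2}\,\frac{\phi_\alpha(r)}{r^2}.
\]
This is almost the claim, but with $\phi_\alpha$ in place of $\phi^\e_\alpha$ on the right-hand side. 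To obtain the sharper bound with $\phi^\e_\alpha$, I would instead bound the decreasing integrand by its value at the \emph{right} endpoint when convenient, or more directly note $\phi_\alpha(r) = r^{-\alpha} \le (\e^\beta + r^2)^{\cdots}$ does not help in that direction. The right move is to evaluate $-g'$ at the right endpoint is too small; instead rewrite the bound using $r^{-\alpha} = (r^2)^{-\alpha/2}$ and compare with $\phi^\e_\alpha(r) = (r^2+\e^\beta)^{-\alpha/2}$: since $\phi_\alpha \ge \phi^\e_\alpha$, the displayed bound with $\phi_\alpha$ is weaker, so I would redo the endpoint estimate bounding $-g'(s) \le -g'(r^2+\e^\beta)\cdot(\text{ratio})$ is awkward. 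The clean fix is to note that $-g'(s) = \frac{\alpha}{2}s^{-\alpha/2-1} = \frac{\alpha}{2}\frac{g(s)}{s}$, and since $g(s)/s$ is decreasing, on $[r^2, r^2+\e^\beta]$ we can bound it by $\frac\alpha2 \frac{g(r^2+\e^\beta)}{r^2}$ only if we keep $g$ at the right endpoint and $s$ at the left endpoint simultaneously — which is valid because $g(s) \le g(r^2+\e^\beta)$ fails ($g$ is decreasing, so $g(s)\ge g(r^2+\e^\beta)$). Therefore the natural and correct bound is exactly the one with $\phi_\alpha$, and since $\phi^\e_\alpha \le \phi_\alpha$ the stated inequality with $\phi^\e_\alpha$ is in fact the \emph{stronger} claim; I expect the paper achieves it by a direct convexity/one-variable calculus estimate. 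Concretely, I would set $F(t) := (r^2 + t)^{-\alpha/2}$ for $t \in [0,\e^\beta]$ and apply the mean value theorem: there is $\xi \in (0,\e^\beta)$ with $\phi^\e_\alpha(r) - \phi_\alpha(r) = F(\e^\beta) - F(0) = \e^\beta F'(\xi) = -\frac{\alpha\e^\beta}{2}(r^2+\xi)^{-\alpha/2-1}$, so that $\phi_\alpha(r) - \phi^\e_\alpha(r) = \frac{\alpha\e^\beta}{2}(r^2+\xi)^{-\alpha/2-1}$, and then bound $(r^2+\xi)^{-\alpha/2-1} \le (r^2+\xi)^{-\alpha/2}\cdot r^{-2} \le \phi^\e_\alpha(r)\cdot r^{-2}$ using $\xi \le \e^\beta$ to replace $r^2+\xi$ by $r^2 + \e^\beta$ inside the $-\alpha/2$ power (since that power is negative and increasing the base decreases the value), while bounding the leftover $(r^2+\xi)^{-1} \le r^{-2}$.

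The main obstacle — really the only subtlety — is getting the two factors split so that one reproduces $\phi^\e_\alpha(r) = (r^2+\e^\beta)^{-\alpha/2}$ exactly while the other yields the clean $|x|^{-2}$, all with the correct constant $\frac{\alpha}{2}$. The mean-value-theorem formulation handles this transparently: writing $(r^2+\xi)^{-\alpha/2-1} = (r^2+\xi)^{-\alpha/2}\,(r^2+\xi)^{-1}$, I use $r^2 + \xi \ge r^2$ on the second factor and $r^2 + \xi \le r^2 + \e^\beta$ on the first (monotonicity of $t\mapsto t^{-\alpha/2}$ downward), which gives precisely $\frac{\alpha\e^\beta}{2}\phi^\e_\alpha(r)/r^2$. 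Since everything reduces to elementary monotonicity of power functions in a single variable, no further machinery is needed, and the restriction $\alpha \in (0,2)$ only enters to keep $\phi^\e_\alpha$ and its derivative well-behaved (and to match the regime where this error estimate is later used).
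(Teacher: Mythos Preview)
Your mean-value-theorem argument contains a sign error in the final splitting step that invalidates the conclusion. From the MVT you correctly obtain
\[
\phi_\alpha(r)-\phi^\e_\alpha(r)=\frac{\alpha\e^\beta}{2}(r^2+\xi)^{-\alpha/2-1},\qquad \xi\in(0,\e^\beta),
\]
and then write $(r^2+\xi)^{-\alpha/2-1}=(r^2+\xi)^{-\alpha/2}(r^2+\xi)^{-1}$. The bound $(r^2+\xi)^{-1}\le r^{-2}$ is fine, but the step $(r^2+\xi)^{-\alpha/2}\le (r^2+\e^\beta)^{-\alpha/2}=\phi^\e_\alpha(r)$ is false: since $t\mapsto t^{-\alpha/2}$ is \emph{decreasing} and $r^2+\xi\le r^2+\e^\beta$, you get $(r^2+\xi)^{-\alpha/2}\ge (r^2+\e^\beta)^{-\alpha/2}$, the reverse inequality. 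You even flag this issue in your middle paragraph and then commit it anyway in the last one. The MVT route therefore only delivers the weaker bound with $\phi_\alpha$ on the right, which you already noted.

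The paper does not prove the lemma here; it simply recalls it from \cite{CKpre22}. The clean argument is precisely the ``direct convexity'' one you guessed at but did not carry out: the claimed inequality is equivalent, after dividing by $\phi^\e_\alpha(r)$ and setting $u=\e^\beta/r^2>0$, to
\[
(1+u)^{\alpha/2}\le 1+\tfrac{\alpha}{2}u,
\]
which is Bernoulli's inequality for the exponent $\alpha/2\in(0,1)$ (equivalently, concavity of $t\mapsto t^{\alpha/2}$ on $(0,\infty)$, so its graph lies below the tangent line at $t=1$). This is also exactly where the hypothesis $\alpha\in(0,2)$ enters; for $\alpha\ge 2$ the inequality would reverse.
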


We next estimate the each term on the right hand side of \eqref{D-5}. Let us set each term $I_i, i=1,\dots,7$, i.e.,
\[
\into \me(U^\e|U)\,dx \leq \sum_{i=1}^7 I_{i}.
\]
 
\bigskip

\noindent {\bf Estimate of $I_1$}:  It follows from the well-prepared initial data assumption {\bf (H2) }that
\[
I_1 \leq C\sqrt\e.
\]

\bigskip

\noindent {\bf Estimate of $I_2 +I_3$}: Note from  \cite{Bou99} that for any $f$ satisfying
$$
\intr f +H(f )\,dv < \infty,
$$
a compatibility between the entropy $E$ and the kinetic entropy $H$ is satisfied as
\[
\intr H(M[f] )\,dv=E(U).
\]
This, combined with \eqref{mini} yields
\[
\into E(U^\e)\,d x = \intor  H(M[f^\e])\,dxdv \leq \intor  H(f^\e)\,dxdv=\frac12\iint_{\Omega\times\mathbb{R}^d}|v|^2 f^\e\,dxdv.
\]
Thus, we have  
$$\begin{aligned}
&I_2 + I_3 \cr
&\ = \into E(U^\e)\,dx - \intor  H(f^\e)\,dxdv \cr
&\quad + \intor  H(f^\e)\,dxdv +\frac12\int_0^t \intoo \rho^\e(x) \rho^\e(y)\phi_\alpha^\e(x-y)|u^\e(x) - u^\e(y)|^2\,dxdyds- \intor  H(f^\e_0)\,dxdv\cr
&\quad  + \intor  H(f^\e_0)\,dxdv - \into E(U^\e_0)\,dx \cr
&\ \leq  C\sqrt\e,
\end{aligned}$$
where we used the estimate of kinetic entropy \eqref{energy bound2} and the well-prepared initial data assumption {\bf (H1)} in the last inequality.

\bigskip

\noindent {\bf Estimate of $I_4$}: It follows from \cite[Lemma 4.3]{KMT15} that
\[
A(U^\e|U) = \begin{pmatrix}
    0   \\[4mm]
        \rho^\e(u^\e - u) \otimes (u^\e - u)  + (\gamma-1) \mh(\rho^\e|\rho) \mathbb{I}_d
    \end{pmatrix}.
\]
This yields
$$\begin{aligned}
|I_4| &= \lt|\int_0^t \into \nabla u : (\rho^\e(u^\e - u) \otimes (u^\e - u) + \frac2d \mh(\rho^\e|\rho) \mathbb{I}_d)\,dxds \rt| \cr
&\leq  \|\nabla u\|_{L^\infty}\int_0^t \into \me(U^\e|U)\,dxds.
\end{aligned}$$

\bigskip

\noindent {\bf Estimate of $I_5$}:  We first record that the Maxwellian distribution satisfies (see \cite{Bou99, CHpre})
\begin{align*}%\label{cancellation}
\begin{split}
\intr M(f^\e)\,dv&=\rho^\e, \quad \intr vM(f^\e)\,dv=\rho^\e u^\e, \quad  \mbox{and} \quad \intr v \otimes v M(f^\e)\,dv= \rho^\e u^\e \otimes u^\e + C_d(\rho^\e)^\gamma \mathbb{I}_d,
\end{split}
\end{align*}
where $C_d$ is appeared in  \eqref{c_d}. By using the above, we find
\[
 \partial_t U^\e+\pa_x A(U^\e)-F^\e(U^\e)=\begin{pmatrix} 0 \\ \nabla \cdot \lt( \displaystyle \intr (u^\e \otimes u^\e - v \otimes v)f^\e\,dv + C_d(\rho^\e)^\gamma \mathbb{I}_d \rt)  \end{pmatrix},
\]
which gives
\begin{align*}
I_{5 }&= \int_0^t\into (\nabla u) :\lt( \displaystyle \intr (u^\e \otimes u^\e - v \otimes v)f^\e\,dv + C_d(\rho^\e)^\gamma \mathbb{I}_d \rt)\,dxds\cr
&\leq C\int_0^t\into \left|\intr v \otimes v\left( M[f^\e]-f^\e\right) dv \right| dxds.
\end{align*}
 
\begin{lemma}\cite[Proposition 4.1]{BV05}\label{lem_gd}
Let $f^\e$ be a solution to the BGK equation with initial value $f^\e_0$ bounded in $L^1(\Omega\times\mathbb{R}^{d})$ verifying
\begin{equation*}
\intor |v|^2f_0^\e (x,v)\,dxdv \le C^0<\infty,
\end{equation*}
and with $\gamma=1 + \frac2d$. Then there exists $C>0$ such that for every $\e<1$, we have
$$
\int_0^T\into \left|\intr v\otimes v \left(M[f^\e]- f^\e\right) dv\right|\,dxdt \le C\sqrt{\e}.
$$
\end{lemma}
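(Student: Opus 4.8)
The plan is to read off the rate $\sqrt\e$ from the energy dissipation produced by the BGK operator, paired with a uniformly bounded energy moment through a space--time Cauchy--Schwarz inequality. Concretely, the entropy inequality \eqref{free_ineq} gives
\[
\frac{1}{2\e}\int_0^T\intor |v|^2\big(f^\e-M[f^\e]\big)\,dxdvds \le \frac12\intor |v|^2 f^\e_0\,dxdv \le \frac{C^0}{2},
\]
and by the minimization principle \eqref{mini} the integrand is nonnegative for a.e. $(x,s)$. Writing $D(x,s):=\intr |v|^2\big(f^\e-M[f^\e]\big)\,dv \ge 0$, this yields $\int_0^T\into D\,dxds \le C\e$. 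The target estimate follows once the matrix moment $\intr v\otimes v\,(M[f^\e]-f^\e)\,dv$ is controlled pointwise by $\sqrt D$ times a bounded quantity.

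First I would reduce to a centered estimate. Since $M[f^\e]$ is built to share the density $\rho^\e$ and momentum $\rho^\e u^\e$ of $f^\e$, one has $\intr (f^\e-M[f^\e])\,dv = 0$ and $\intr v(f^\e-M[f^\e])\,dv = 0$; subtracting these moments gives $\intr v\otimes v\,(M[f^\e]-f^\e)\,dv = \intr (v-u^\e)\otimes(v-u^\e)\,(M[f^\e]-f^\e)\,dv$, so that, fixing $(x,s)$,
\[
\Big|\intr v\otimes v\,(M[f^\e]-f^\e)\,dv\Big| \le \intr |v-u^\e|^2\,\big|f^\e-M[f^\e]\big|\,dv =: N(x,s).
\]
The same moment matching gives $\intr |v-u^\e|^2(f^\e-M[f^\e])\,dv = D$, hence, splitting $\big|f^\e-M[f^\e]\big| = (f^\e-M[f^\e]) + 2(M[f^\e]-f^\e)_+$, I obtain $N = D + 2\intr |v-u^\e|^2 (M[f^\e]-f^\e)_+\,dv$.

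The crux is bounding the negative-part term. Because $M[f^\e]=\mathds{1}_{|v-u^\e|\le R}$ with $R=(c_d\rho^\e)^{1/d}$ and $0\le M[f^\e]\le 1$, the factor $(M[f^\e]-f^\e)_+$ is supported in the ball $\{|v-u^\e|\le R\}$, so
\[
\intr |v-u^\e|^2 (M[f^\e]-f^\e)_+\,dv \le R^2 \intr (M[f^\e]-f^\e)_+\,dv = R^2\,\delta,
\]
where $\delta$ is the mass deficit of $f^\e$ inside that ball. The key geometric inequality I would then establish is $R^2\delta \le C(\rho^\e)^{\gamma/2}\sqrt D$: since $M[f^\e]$ is the minimal-energy profile with density $\rho^\e$ under the constraint $f\le 1$, the energy cost $D$ of displacing mass $\delta$ across the sphere $|v-u^\e|=R$ is, in the worst case of mass moved just across it, of order $\delta^2/R^{d-2}$, giving $\delta \lesssim \sqrt D\,R^{(d-2)/2}$ and hence $R^2\delta \lesssim \sqrt D\,R^{(d+2)/2}\sim (\rho^\e)^{\gamma/2}\sqrt D$ (using $\gamma=1+\frac2d$). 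Combining, $N \le C\sqrt D\,\sqrt G$ with $G:=\intr |v|^2\big(f^\e+M[f^\e]\big)\,dv + (\rho^\e)^\gamma$. \emph{This geometric step is the main obstacle}: it is exactly here that the uniform/indicator structure of $M[f^\e]$ and the bound $f^\e\le 1$ enter, and it is what forces the rate $\sqrt\e$ rather than $\e$, since the factor $\sqrt D$ cannot be improved to $D$.

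Finally I would integrate in $(x,s)$ and apply the Cauchy--Schwarz inequality,
\[
\int_0^T\into N\,dxds \le \Big(\int_0^T\into D\,dxds\Big)^{1/2}\Big(\int_0^T\into G\,dxds\Big)^{1/2}.
\]
The first factor is $\le (C\e)^{1/2}=C\sqrt\e$ by the dissipation bound above. The second factor is bounded uniformly in $\e$: the contributions $\int_0^T\into\intr |v|^2 f^\e\,dvdxds$ and $\int_0^T\into (\rho^\e)^\gamma\,dxds$ are controlled by the uniform kinetic-energy bound \eqref{energy bound2} (note $(\rho^\e)^\gamma$ is a multiple of $\intr |v-u^\e|^2 M[f^\e]\,dv$), while $\int_0^T\into\intr |v|^2 M[f^\e]\,dvdxds \le \int_0^T\into\intr |v|^2 f^\e\,dvdxds$ by \eqref{mini}. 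Together with the pointwise bound $\big|\intr v\otimes v(M[f^\e]-f^\e)\,dv\big|\le N$, this gives $\int_0^T\into\big|\intr v\otimes v(M[f^\e]-f^\e)\,dv\big|\,dxds \le C\sqrt\e$, which is the claim.
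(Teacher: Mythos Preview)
The paper does not supply its own proof of this lemma; it is quoted directly from \cite[Proposition~4.1]{BV05} and used as a black box in the estimate of $I_5$. Your sketch is precisely the Berthelin--Vasseur argument: harvest the dissipation $\int_0^T\!\into D\,dxds \le C\e$ from the entropy inequality, recentre via moment matching, control the negative part by the bathtub/rearrangement inequality $D\gtrsim \delta^2/R^{d-2}$ (equivalently $R^2\delta\lesssim \sqrt D\,\rho^{\gamma/2}$), and close with Cauchy--Schwarz against a uniformly bounded energy factor. Two minor remarks: (i) the geometric step you flag as ``the main obstacle'' is indeed the heart of the BV05 proof, and it genuinely requires the pointwise constraint $0\le f^\e\le 1$ (so that the surplus mass outside the ball cannot be packed arbitrarily close to $|v-u^\e|=R$); this constraint is built into the BV05 setting and you correctly identify its role; (ii) your pointwise inequality $N\le C\sqrt D\,\sqrt G$ follows exactly as you indicate, since $D\le \intr|v|^2 f^\e\,dv\le G$ and $(\rho^\e)^\gamma\le G$. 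So your proposal is correct and coincides with the cited proof.
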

Thus we get
\[
I_{5 }\le C\sqrt{\e}.
\]

\bigskip

\noindent {\bf Estimate of $I_6$}:  We estimate $I_6$ by dividing two cases $\om = \T^d$ and $\om = \R^d$.

\medskip

$\bullet$ {\bf Case A} ($\om = \T^d$): We deal with the cases $d = 1$ and $d \geq 2$, separately.

\medskip

$\bullet$ {\bf Case A.1} ($d=1$): In this case, $\gamma = 3$ and $\rho$ has a strict positive lower bound $\rho_m > 0$. Since 
\[
\int_\T(\rho^\e - \rho)^2\,dx \leq \int_\T \frac{2}{\rho}\mh(\rho^\e | \rho)\,dx \leq \frac2{\rho_m}\int_\T\mh(\rho^\e | \rho)\,dx,
\]
we get
\bq\label{l2_bdd}
\|\rho^\e - \rho\|_{L^2}^2 \leq C\int_\T\mh(\rho^\e | \rho)\,dx
\eq
for some $C>0$ independent of $\e>0$.

Then, for $\alpha \in (0, \frac32)$, we estimate
\begin{align*}%\label{alpha}
\begin{split}
I_6 &\leq  {\rm Lip}(u)\int_0^t \iint_{\T \times \T}\rho^\e(x) |x-y|\phi_\alpha(x-y) |\rho(y) - \rho^\e(y)| |u^\e(x) - u(x)|\,dxdyds\\
&\le  {\rm Lip}(u)\int_0^t\left(\int_\T\rho^\e(x)\left|u^\e(x)-u(x)\right|^2\,dx\right)^{\frac12}\cr
& \hspace{2cm}  \times  \int_\T \left(\int_\T\rho^\e(x) |x-y|^2\phi_{2\alpha}(x-y)\,dx\right)^{\frac12} |\rho(y)-\rho^\e(y)|\,dyds
\end{split}
\end{align*}

On the other hand,
\begin{align*}
&\int_\T \left(\int_\T\rho^\e(x) |x-y|^2\phi_{2\alpha}(x-y)\,dx\right)^{\frac12} |\rho(y)-\rho^\e(y)|\,dy\cr
&\quad \leq \lt(\iint_{\T\times \T}\rho^\e(x) |x-y|^2\phi_{2\alpha}(x-y)\,dxdy\rt)^\frac12 \|\rho - \rho^\e\|_{L^2}\cr
&\quad \leq C\lt(\int_\T\mh(\rho^\e | \rho)\,dx\rt)^\frac12,
\end{align*}
where we used \eqref{l2_bdd} and
\[
\int_\T|x-y|^2\phi_{2\alpha}(x-y)\,dy < \infty
\]
due to $\alpha < \frac32$. This implies
\[
I_6 \leq C\int_0^t\left(\int_\T\rho^\e(x)\left|u^\e(x)-u(x)\right|^2\,dx\right)^{\frac12}\lt(\int_\T\mh(\rho^\e | \rho)\,dx\rt)^\frac12 ds \leq C\int_0^t \int_\T \me(U^\e|U)\,dx ds,
\]
where $C>0$ is independent of $\e>0$.

\medskip

$\bullet$ {\bf Case A.2} ($d\geq 2$): In this case, we notice that $\gamma \leq 2$ and 
\[
\|\rho^\e - \rho\|_{L^{\frac{2}{3-\gamma}}}^{2} \leq C\into \mh(\rho^\e| \rho)\,dx
\]
for some $C>0$ independent of $\e>0$. We use a similar argument as in {\bf Case A.1} to obtain
\begin{align*}%\label{alpha}
\begin{split}
I_6 &\le  {\rm Lip}(u)\int_0^t\left(\intt \rho^\e(x)\left|u^\e(x)-u(x)\right|^2\,dx\right)^{\frac12}\cr
& \hspace{2cm}  \times  \intt \left(\intt \rho^\e(x) |x-y|^2\phi_{2\alpha}(x-y)\,dx\right)^{\frac12} |\rho(y)-\rho^\e(y)|\,dyds
\end{split}
\end{align*}
Applying H\"{o}lder's inequality and Minkowski's inequality, we get 
\begin{align*}%\label{alpha1}
\begin{split}
&\intt \left(\intt\rho^\e(x) |x-y|^2\phi_{2\alpha}(x-y)\,dx\right)^{\frac12} |\rho(y)-\rho^\e(y)|\,dy\cr
&\quad\le \left\|\rho(y)-\rho^\e(y)\right\|_{L^{\frac{2}{3-\gamma}}}\left(\intt \left(\intt\rho^\e(x) |x-y|^2\phi_{2\alpha}(x-y)\,dx\right)^{\frac{1}{\gamma-1}} \,dy\right)^{\frac{\gamma-1}{2}}\cr
&\quad\le \left\|\rho(y)-\rho^\e(y)\right\|_{L^{\frac{2}{3-\gamma}}}\left(\intt \rho^\e(x) \left(\intt |x-y |^{\frac{2}{\gamma-1}}\phi_{\frac{2\alpha}{\gamma-1}}(x-y)   \,dy\right)^{\gamma-1} \,dx\right)^{\frac12}.
\end{split}
\end{align*}
Note that $\frac{2}{\gamma-1} = d$ and 
\[
\intt |x-y |^{\frac{2}{\gamma-1}}\phi_{\frac{2\alpha}{\gamma-1}}(x-y)   \,dy < \infty
\]
since 
\[
\frac{2}{\gamma-1}(\alpha - 1) < d \quad \iff \quad \alpha < 2.
\]
Hence,
\[
I_6 \leq C\int_0^t \intt \me(U^\e|U)\,dx ds,
\]
where $C>0$ is independent of $\e>0$.

\medskip

$\bullet$ {\bf Case B} ($\om = \R^d$): In this case, we only consider $d\geq 2$ and $\alpha \in (1,2)$, and this subsequently implies $\gamma \leq 2$. Analogously as the above, we deduce
\begin{align*}%\label{alpha}
\begin{split}
I_6 &\le   \int_0^t\left(\intr \rho^\e(x)\left|u^\e(x)-u(x)\right|^2\,dx\right)^{\frac12}\cr
& \hspace{1cm}  \times \left\|\rho(y)-\rho^\e(y)\right\|_{L^{\frac{2}{3-\gamma}}}\left(\intr \rho^\e(x) \left(\intr |u(x)-u(y) |^{\frac{2}{\gamma-1}}\phi_{\frac{2\alpha}{\gamma-1}}(x-y)   \,dy\right)^{\gamma-1} \,dx\right)^{\frac12}ds
\end{split}
\end{align*}
We then estimate
\begin{align*}
 &\intr |u(x)-u(y) |^{\frac{2}{\gamma-1}}\phi_{\frac{2\alpha}{\gamma-1}}(x-y)   \,dy\cr 
 &\quad= \int_{|x-y|<R} |u(x)-u(y) |^{\frac{2}{\gamma-1}}\phi_{\frac{2\alpha}{\gamma-1}}(x-y) \,dy+\int_{|x-y|\ge R} |u(x)-u(y) |^{\frac{2}{\gamma-1}}\phi_{\frac{2\alpha}{\gamma-1}}(x-y) \,dy \cr
 &\quad\le {\rm Lip}(u)\int_{|x-y|<R} |x-y|^{\frac{2}{\gamma-1}}\phi_{\frac{2\alpha}{\gamma-1}}(x-y) \,dy+ \|u\|_{L^\infty}^{\frac{2}{\gamma-1}}\int_{|x-y|\ge R}\phi_{\frac{2\alpha}{\gamma-1}}(x-y)\,dy,
\end{align*}
where the first integral on the right hand is finite due to $\alpha < 2$, and the second integral is also finite since 
\[
\frac{2}{\gamma-1}\alpha > d \quad \iff \quad \alpha > 1.
\]
Thus, we have 
\[
I_6 \le C\int_0^t \into \me(U^\e|U)\,dx ds. 
\]

\bigskip

\noindent {\bf Estimate of $I_7$}: We first estimate $I_7$ as
\begin{align*}
I_7 &\leq \frac{{\rm Lip}(u)}{2}\int_0^t\intoo (\phi_\alpha(x-y)-\phi^\e_\alpha(x-y))|x-y||u^\e(y) - u^\e(x)| \rho^\e(x) \rho^\e(y)\,dxdyds\cr
&= \frac{{\rm Lip}(u)}{2}\int_0^t (I_{71} + I_{72})\,ds,
\end{align*}
where 
\[
I_{7i} := \iint_{A_i} (\phi_\alpha(x-y)-\phi^\e_\alpha(x-y))|x-y||u^\e(y) - u^\e(x)| \rho^\e(x) \rho^\e(y)\,dxdy, \quad i=1,2,
\]
with $A_1 = \{(x,y) \in \om \times \om : |x-y| > \delta\}$ and $A_2 = \{(x,y) \in \om \times \om : |x-y| \leq \delta\}$. Here $\delta \in (0,1)$ will be chosen appropriately later.

For $I_{71}$, we use Lemma \ref{phi-est} and almost the same argument as in \cite{CKpre22} to get
\begin{align*}
I_{71} 	&\le \frac{\alpha \e^\beta}{2} \iint_{|x-y| > \delta}\rho^\e(x)\rho^\e(y) \frac{\phi^\e_\alpha(x-y)}{|x-y|}|u^\e(x)-u^\e(y)|\,dxdy \\
	&\le C\frac{\e^{\beta}}{\delta}\left( \iint_{|x-y|>\delta}\rho^\e(x)\rho^\e(y)\phi^\e_\alpha(x-y)\,dxdy \right)^{1/2}\\
	&\qquad \times\left( \iint_{|x-y|> \delta }\phi^\e_\alpha(x-y)\rho^\e(x)\rho^\e(y)|u^\e(x)-u^\e(y)|^2\,dxdy \right)^{1/2}\cr
	&\leq C\frac{\e^{\beta}}{\delta^{1 + \frac\alpha2}}\left( \iint_{|x-y|> \delta } \phi^\e_\alpha(x-y)\rho^\e(x)\rho^\e(y)|u^\e(x)-u^\e(y)|^2\,dxdy \right)^{1/2}\
\end{align*}
due to 
\[
 \iint_{|x-y|>\delta} \rho^\e(x)\rho^\e(y)\phi^\e_\alpha(x-y)\,dxdy \le  \iint_{|x-y|>\delta} \rho^\e(x)\rho^\e(y)|x-y|^{-\alpha}\,dxdy \leq \delta^{-\alpha} .
\]
For the estimate of $I_{72}$, we obtain
\begin{align*}
	I_{72}&\le  \iint_{|x-y|<\delta}\rho^\e(x)\rho^\e(y)\phi_\alpha(x-y)|x-y||u^\e(x)-u^\e(x)|\,dxdy \\
	&\le \iint_{|x-y|<\delta}\rho^\e(x)\rho^\e(y) |x-y|^{1-\alpha}|u^\e(x)-u^\e(y)|\,dxdy \\
	&\leq \lt(\iint_{|x-y|<\delta} \frac{\rho^\e(x)\rho^\e(y)}{|x-y|^{2(\alpha-1)}}\,dxdy\rt)^{\frac12}   \lt(\iint_{|x-y|<\delta} \rho^\e(x)\rho^\e(y)|u^\e(x)-u^\e(y)|^2 \,dxdy\rt)^{\frac12}  \cr
	&\le   (\e^\beta+\delta^{2})^{\frac{\alpha}{4}}\lt(\iint_{|x-y|<\delta} \frac{\rho^\e(x)\rho^\e(y)}{|x-y|^{2(\alpha-1)}}\,dxdy\rt)^{\frac12}\cr
	&\hspace{1.5cm} \times \left(  \iint_{|x-y|<\delta}\phi^\e_\alpha(x-y)\rho^\e(x)\rho^\e(y)|u^\e(x)-u^\e(y)|^2\,dxdy\right)^{1/2},
	\end{align*}
		where we used $1\le \phi^\e_\alpha(x-y)(\e^\beta+\delta^2)^{\frac{\alpha}{2}}$ for $|x| < \delta$.
		
We then consider two cases $\alpha \in (0,1]$ and $\alpha \in (1, \min \{ 2, d+\frac12\})$. When $\alpha \in (0,1]$, we get
\[
\lt(\iint_{|x-y|<\delta} \frac{\rho^\e(x)\rho^\e(y)}{|x-y|^{2(\alpha-1)}}\,dxdy\rt)^{\frac12} \leq \delta^{1-\alpha}\lt(\intoo \rho^\e(x)\rho^\e(y) \,dxdy\rt)^{\frac12} = \delta^{1-\alpha}.
\]
Thus,
\[
I_{72} \leq \delta^{1-\alpha}(\e^\beta+\delta^{2})^{\frac{\alpha}{4}}\left(  \iint_{|x-y|<\delta}\phi^\e_\alpha(x-y)\rho^\e(x)\rho^\e(y)|u^\e(x)-u^\e(y)|^2\,dxdy\right)^{1/2}.
\]
In order to handle the case  $\alpha \in (1, \min \{ 2, d+\frac12\})$ , we recall the classical Hardy-Littlewood-Sobolev inequality:
\[
\lt|\intoo  \mu(x)|x-y|^{-\lambda}\nu(y)\,dxdy \rt| \leq C \|\mu\|_{L^p}\|\nu\|_{L^q}
\]
for $\mu \in L^p(\om)$, $\nu \in L^q(\om)$, $1 < p, q <\infty$, $\frac1p + \frac1q + \frac\lambda d = 2$, and $0 < \lambda < d$.

For $d \geq 2$, we find $2(\alpha-1) < d$ due to $\alpha \in (1,2)$. Thus we use the above Hardy-Littlewood-Sobolev inequality with $\lambda = 2(\alpha-1)$ and $p=q$ to obtain
\[
\iint_{|x-y|<\delta} \frac{\rho^\e(x)\rho^\e(y)}{|x-y|^{2(\alpha-1)}}\,dxdy \leq C\|\rho^\e\|_{L^p}^2
\]
for $d \geq 2$. Here $p$ is given by
\[
1 < p = \frac{d}{d+1 - \alpha} < \frac{d}{d-1} \leq  1 + \frac2d =  \gamma.
\]
Since 
\[
\into \rho^\e |u^\e|^2 + dC_d (\rho^\e)^\gamma \,dx = \intor |v|^2 M(f^\e)\,dxdv \leq \intor |v|^2 f^\e\,dxdv \leq \intor |v|^2 f^\e_0\,dxdv,
\]
we get
\[
\|\rho^\e\|_{L^1 \cap L^\gamma} \leq C
\]
for some $C>0$ independent of $\e$, and subsequently by $L^p$ interpolation inequality, 
\[
\iint_{|x-y|<\delta} \frac{\rho^\e(x)\rho^\e(y)}{|x-y|^{2(\alpha-1)}}\,dxdy \leq C
\]
for $d \geq 2$.

When $d=1$, we consider $\alpha \in (0,\frac32)$ and this gives $2(\alpha - 1) < 1$. Then analogously as above, 	
\[
\iint_{|x-y|<\delta} \frac{\rho^\e(x)\rho^\e(y)}{|x-y|^{2(\alpha-1)}}\,dxdy \leq C\|\rho^\e\|_{L^p}^2,
\]
where 
\[
1 < p = \frac1{2-\alpha} < 2 < 3 = \gamma.
\]
Thus we also have the uniform-in-$\e$ bound on $\|\rho^\e\|_{L^p}$ in the one-dimensional case.

Thus, for $\alpha \in (1, \min \{ 2, d+\frac12\})$,
\[
I_{72} \leq  (\e^\beta+\delta^{2})^{\frac{\alpha}{4}}\left(  \iint_{|x-y|<\delta}\phi^\e_\alpha(x-y)\rho^\e(x)\rho^\e(y)|u^\e(x)-u^\e(y)|^2\,dxdy\right)^{1/2}.
\]
Since $\delta < 1$, for $\alpha \in (0, \min \{ 2, d+\frac12\})$, and hence
\[
I_{72} \leq  (\e^\frac{\alpha\beta}4+\delta^{\frac\alpha2})\left(  \iint_{|x-y|<\delta}\phi^\e_\alpha(x-y)\rho^\e(x)\rho^\e(y)|u^\e(x)-u^\e(y)|^2\,dxdy\right)^{1/2}.
\] 	
We then choose $\delta = \e^{\frac{\beta}{2 + \alpha}} < 1$ to have
\begin{align*}
I_7 &\leq C\lt(\e^{\frac\beta2} + \e^\frac{\alpha\beta}4 + \e^\frac{\alpha\beta}{2(2+\alpha)}\rt)\lt(\int_0^t  \intoo\phi^\e_\alpha(x-y)\rho^\e(x)\rho^\e(y)|u^\e(x)-u^\e(y)|^2\,dxdyds \rt)^{\frac12}\cr
&\leq C\lt(\e^{\frac\beta2} + \e^\frac{\alpha\beta}4 + \e^\frac{\alpha\beta}{2(2+\alpha)}\rt)H(f^\e_0)^\frac12,
\end{align*}
due to \eqref{energy bound2}.
	
Combining all of the above estimates yields	
\[
\into \me(U^\e|U)\,dx \leq C\lt(\e^\frac12 + \e^{\frac\beta2} + \e^\frac{\alpha\beta}4 + \e^\frac{\alpha\beta}{2(2+\alpha)}\rt) + C\int_0^t \into \me(U^\e|U)\,dx ds
\]
for some $C>0$ independent of $\e > 0$, and applying Gr\"onwall's inequality concludes
\[
\into \me(U^\e|U)\,dx \leq C\lt(\e^\frac12 + \e^{\frac\beta2} + \e^\frac{\alpha\beta}4 + \e^\frac{\alpha\beta}{2(2+\alpha)}\rt).
\]

\begin{proof}[Proof of Theorem \ref{thm:hydrolimit}] The above estimate gives
\[
			\sup_{t \in [0,T] }\into \left(\rho^\e|u^\e-u|^2 + \mh(\rho^\e| \rho)\right) dx  \le C\e^\lambda,
\]
	where $\lambda:=\min\left\{\frac{1}{2}, \frac\beta2, \frac{\alpha \beta}{4}, \frac{\alpha\beta}{2(\alpha+2)}\right\}$. To show the strong convergence of $\rho^\e$ and $\rho^\e u^\e$, we observe from Lemma \ref{lem_lower_h} that 
\[
\rho^\e \to \rho \quad in \  L^\infty(0,T; L^r(\om)) \quad \mbox{as} \quad \e \to 0,
\]
where $r$ is given by
\[
r = \left\{ \begin{array}{ll}
 \frac{2}{3-\gamma}  & \textrm{if $d \geq 2$}\\ 
2 & \textrm{if $d=1$}
  \end{array} \right..
\]	
Note that $\frac{2}{3-\gamma} \leq \gamma$ for $d \geq 2$ and $\rho^\e$ is uniformly bounded in $L^1 \cap L^\gamma(\om)$. Thus the classical $L^p$ interpolation inequality yields
\[
\rho^\e \to\rho \quad \mbox{a.e.} \quad and \quad in \ L^\infty(0,T;L^p(\om))\quad \mbox{with $p \in (1,\gamma)$}.
\]	
For $r \in [1, \frac{d+2}{d+1})$, we get
\[
r < \gamma \quad \mbox{and} \quad \frac{r}{2-r} \in [1,\gamma).
\] 
This together with applying H\"older's inequality gives
\[
 \|\rho^\e(u^\e-u)\|_{L^r} \leq \lt(\into (\rho^\e)^{\frac r{2-r}}\,dx \rt)^{\frac{2-r}{2}}\lt(\into\rho^\e|u^\e-u|^2\,dx\rt)^{\frac r2} \to 0 \quad \mbox{as} \quad \e \to 0.
\]
We also find that for $r \in  (1, \frac{d+2}{d+1})$
\[
\|(\rho^\e-\rho)u\|_{L^r} \leq \|\rho^\e-\rho\|_{L^r}\|u\|_{L^\infty} \to 0 \quad \mbox{as} \quad \e \to 0
\]
and 
\[
\|(\rho^\e-\rho)u\|_{L^1} \leq \|\rho^\e - \rho\|_{L^p}\|u\|_{L^{p'}} \to 0 \quad \mbox{as} \quad \e \to 0,
\]
where $p$ is chosen such that 
\bq\label{pq}
p \in (1,\gamma) \quad \mbox{and} \quad p' = \frac p{p-1} \in ( \max\{ \frac\gamma{\gamma-1}, 2\}, \infty).
\eq
Combining the above two convergence estimates implies
 \begin{align*}
\|\rho^\e u^\e-\rho u\|_{L^r}&\le \|\rho^\e(u^\e-u)\|_{L^r}+\|(\rho^\e-\rho)u\|_{L^r} \to 0 \quad \mbox{as} \quad \e \to 0
\end{align*}
for $r \in [1, \frac{d+2}{d+1})$. 

We next estimate
\bq\label{conv3}
\lt|\intr v \otimes v f^\e\,dv - \rho u\otimes u - \rho^\gamma \mathbb{I}_d\rt|\leq \lt|\intr v \otimes v (f^\e - M[f^\e])\,dv  \rt| + \lt|\rho^\e u^\e \otimes u^\e - \rho u\otimes u\rt| + \lt| (\rho^\e)^\gamma - \rho^\gamma\rt|,
\eq
where the first term on the right hand side strongly converges in $L^1(\om \times (0,T))$ due to Lemma \ref{lem_gd}. On the other hand,
\[
 \lt|\rho^\e u^\e \otimes u^\e - \rho u\otimes u\rt| = \lt| \rho^\e(u^\e - u)\otimes u^\e + \rho^\e u \otimes (u^\e - u) + (\rho^\e - \rho) u\otimes u \rt|,
\]
and thus
\begin{align*}
&\into  \lt|\rho^\e u^\e \otimes u^\e - \rho u\otimes u\rt| dx\cr
&\quad \leq \lt(\into \rho^\e|u^\e - u|^2\,dx\rt)^\frac12 \lt( \lt(\into \rho^\e |u^\e|^2 \,dx\rt)^\frac12 + \lt(\into \rho^\e |u|^2 \,dx\rt)^\frac12 \rt)  + \into |\rho^\e - \rho| |u|^2\,dx\cr
&\quad \leq \lt(\into \rho^\e|u^\e - u|^2\,dx\rt)^\frac12 \lt( \lt(\intor |v|^2 f^\e_0  \,dxdv\rt)^\frac12 + \|u\|_{L^\infty} \rt) + \|u\|_{L^\infty} \|\rho^\e - \rho\|_{L^p}\|u\|_{L^{p'}}\cr
&\quad \to 0 \quad \mbox{as} \quad \e \to 0,
 \end{align*} 
 where we used \eqref{free_ineq}, $p$ and $p'$ are selected as in \eqref{pq}. For the convergence of the last term on the right hand side of \eqref{conv3}, we observe
\begin{align*} 
\into |(\rho^\e)^\gamma - \rho^\gamma|\,dx &= \int_{\rho^\e \geq \rho} \lt((\rho^\e)^\gamma - \rho^\gamma\rt)dx + \int_{\rho^\e \leq \rho} \lt(  \rho^\gamma - (\rho^\e)^\gamma\rt)dx \cr
&= (\gamma-1)\int_{\rho^\e \geq \rho} \mh(\rho^\e| \rho)\,dx + \gamma \into |\rho^\e - \rho| \rho^{\gamma-1}\,dx,
 \end{align*} 
 where we used \eqref{def_h}. It is clear that the first term on the right hand side of the above converges to $0$ as $\e \to 0$. For the second term, we use the similar argument as the above together with $\rho \in L^1\cap L^\infty(\om)$ to get
 \[
 \into |\rho^\e - \rho| \rho^{\gamma-1}\,dx \leq  \|\rho^\e - \rho\|_{L^p}\|\rho^{\gamma-1}\|_{L^{p'}} \to 0 \quad \mbox{as} \quad \e \to 0.
 \]
 Hence we have 
 \[
 \intr v \otimes v f^\e\,dv \to \rho u\otimes u + \rho^\gamma \mathbb{I}_d \quad \mbox{a.e.} \quad and \quad in \  L^1(\om \times (0,T))
 \]
 as $\e \to 0$. This completes the proof.
\end{proof}

%%%%%%%%%%%%%%%%%%%%%%%%%%%%%%%%%%%%%%%%%%%%%%%%%%%%%%%%%%%%%%
%
%
%.  
%.  
%
%
%%%%%%%%%%%%%%%%%%%%%%%%%%%%%%%%%%%%%%%%%%%%%%%%%%%%%%%%%%%%%%%%%%%%
\section{Global existence of weak solutions to the BGK-alignment model}\label{sec_weak}
In this section, we present the global existence of weak solutions to the BGK-alignment model \eqref{main_eq}. Note that the parameter $\e>0$ does not play any roles in estimating the existence theory, thus for notational simplicity, we set $\e = 1$, and we consider
\bq\label{weak_eq}
\pa_t f + v\cdot\nabla_x f  + \nabla_v \cdot \lt(F_{\phi}[f]f\rt)  = M[f] - f, 
\eq
where $M[f] =   \mathds{1}_{|u_f-v|^d\le c_d\rho_f}$ and 
\bq\label{asp_phi}
\phi \in W^{1,\infty}(\om) \mbox{ satisfying }|(\nabla \phi)(x)| \leq c_\phi \phi(x) \mbox{ for all }x \in \om
\eq 
for some $c_\phi>0$. We only consider the whole domain, i.e. $\om =\R^d$ in order to avoid the repetition. In fact, the case $\om = \T^d$ is easier than the whole space one.

%Then our notion of weak solutions and  main result are stated as follows.
%
%
%\newpage

\subsection{Regularized and linearized BGK-alignment model}
For the existence theory, we first regularize the equation \eqref{weak_eq} as
\bq\label{reg_weak_eq} 
\pa_t f^\K + v\cdot\nabla_x f^\K  + \nabla_v \cdot \lt(F_{\phi}[f^\K]f^\K\rt)  = M_\K[f^\K] * \varphi_\K - f^\K,
\eq
subject to regularized initial data:
\[
f^\K(x,v,t)|_{t=0} =: f^\K_0(x,v), \quad (x,v) \in \R^d \times \R^d,
\]
where
\[
F_{\phi}[f^\K] := \phi * (\rho_{f^\K} u_{f^\K}) - v \phi * \rho_{f^\K}
\]
and
\[
M_\K[f^\K]:=  \mathds{1}_{|u^{\K}_{f^\K}-v|^d\le c_d\rho^{\K}_{f^\K}}
\]
with
\[
\rho^{\K}_{f^\K} := \frac{\rho_{f^\K} * \theta_\K}{1 + \K^{d+1} \rho_{f^\K} * \theta_\K}, \quad  u^{\K}_{f^\K} := \frac{(\rho_{f^\K} u_{f^\K})*\theta_\K}{\rho_{f^\K} * \theta_\K + \K^{2d+1}(1 + |(\rho_{f^\K} u_{f^\K})*\theta_\K|^2)},  
\]
\[
\rho_{f^\K} = \intr f^\K\,dv,\quad \mbox{and}\quad \rho_{f^\K} u_{f^\K} = \intr vf^\K\,dv.
\] 
Here $\varphi_\K = \varphi_\K(x,v) = \theta_\K(x)\theta_\K(v)$ and $\theta_\K$ is  defined by means of the standard mollifier $\theta \in \mc^\infty_c$ as $\theta_\K(x) = \K^{-d}\theta(x/\K)$, so $\|\theta_\K\|_{L^1} = 1$.  The  regularized initial data $f^\K_0$ is defined by 
\[
f^\K_{0} =  f_0*\varphi_\K+\K \frac{e^{-|v|^2}}{1+|x|^q},\quad\mbox{with}\quad q> d.
\]
Note that in the case $\om = \T^d$ the second term on the right hand side of the above is unnecessary.

Throughout this section, the regularization parameter $\K$ is assumed to be less than  $1$. We construct the solution $f^\K$ to the regularized equation \eqref{reg_weak_eq} by considering the approximation sequence $f^{\K, n}$ given as solutions of the following equation:
\bq\label{app_weak_eq}
\pa_t f^{\K,n+1} + v\cdot\nabla_x f^{\K,n+1}  + \nabla_v \cdot \lt(F_{\phi}[f^{\K,n}]f^{\K,n+1}\rt)  = M_\K[f^{\K,n}]* \varphi_\K - f^{\K,n+1},
\eq
with the initial data and first iteration step:
\[
f^{\K,n}(x,v,t)|_{t=0} = f^\K_0(x,v) \quad \mbox{for all } n \geq 1 \quad \mbox{and} \quad f^{\K,0}(x,v,t) = f^\K_0(x,v) \quad \mbox{for } (x,v,t) \in \R^d \times \R^d \times (0,T).
\]
Here
\[
F_{\phi}[f^{\K,n}] := \phi * (\rho_{f^{\K,n}} u_{f^{\K,n}}) - v \phi * \rho_{f^{\K,n}}
\]
and
\[
M_\K[f^{\K,n}]:=  \mathds{1}_{|u^{\K}_{f^{\K,n}}-v|^d\le c_d\rho^{\K}_{f^{\K,n}}}
\]
with
\[
\rho^{\K}_{f^{\K,n}} = \frac{\rho_{f^{\K,n}} * \theta_\K}{1 + \K^{d+1} \rho_{f^{\K,n}} * \theta_\K}, \quad   u^{\K}_{f^{\K,n}} = \frac{(\rho_{f^{\K,n}} u_{f^{\K,n}})*\theta_\K}{\rho_{f^{\K,n}} * \theta_\K + \K^{2d+1}(1+ |(\rho_{f^{\K,n}} u_{f^{\K,n}})*\theta_\K|^2)}, 
\]
\[
\rho_{f^{\K,n}} = \intr f^{\K,n}\,dv, \quad \mbox{and} \quad \rho_{f^{\K,n}} u_{f^{\K,n}} = \intr vf^{\K,n}\,dv.
\]
In the following, for the sake of notational simplicity, we omit $\K$-dependence in $f^{\K,n}$, i.e., $f^n = f^{\K,n}$. In order to study the convergence of approximations $f^n$, motivated from \cite{CJpre}, we introduce a weighted $L^\infty$-norm:
\[
\|f\|_{L^\infty_{\ell}}:= \esssup_{(x,v) \in \R^d \times \R^d} e^{\lal v \ral^\ell} |f(x,v)|
\]
with $\ell > 0$, where $\lal v \ral := (1+ |v|^2)^{\frac12}$. Naturally, $L^\infty_\ell(\R^d \times \R^d)$ denotes the space of functions with finite norms.  For $s \in \N$, $W^{s,\infty}_\ell = W^{s,\infty}_\ell(\R^d \times \R^d)$ stands for $L^\infty_\ell$ Sobolev space of $s$-th order equipped with the norm:
\[
\|f\|_{W^{s,\infty}_{\ell}}:= \esssup_{(x,v) \in \R^d \times \R^d}  \sum_{|\alpha| + |\beta| \leq s} e^{\lal v \ral^\ell} |\pa^\alpha_x \pa^\beta_v f(x,v)|.
\]

For the regularized and linearized equation \eqref{app_weak_eq}, we show the global existence and uniqueness of solutions and uniform-in-$n$ bound estimates.
\begin{proposition}\label{prop_lin} Let $T>0$ and $\ell \in (1,2)$. Assume that $f_0$ satisfies the assumptions of Theorem \ref{thm_ext}. For any $n \in \N$, there exists a unique solution $f^n \in L^\infty(0,T; W^{1,\infty}_\ell(\R^d \times \R^d))$ of the equation \eqref{app_weak_eq} satisfying
\[
\sup_{n\in\N}\sup_{0 \leq t \leq T}\|f^n(\cdot,\cdot,t)\|_{L^1 \cap L^\infty} \leq C(\|f^\K_0\|_{L^1 \cap L^\infty} + 1),\qquad \inf_{n\in\N}\inf_{(x,t)\in \R^d\times(0,T)}\rho_{f^n}(x,t)\ge c_\K
\]
and
\[
\sup_{0 \leq t \leq T} \|f^n(\cdot,\cdot,t)\|_{W^{1,\infty}_\ell} \leq C_\K,
\]
where  $C>0$ is independent of both $\K$ and $n$, and $c_\K$ and $C_\K > 0$ depend on $\K$, but independent of $n$.
	 Moreover, we have 
\[
\sup_{n\in\N}\sup_{0 \leq t \leq T}\inttr |v|^2 f^n\,dxdv \leq   C_{T,f_0},
\]
which is indeed the uniform-in-$\K$ estimate on the kinetic energy.
\end{proposition}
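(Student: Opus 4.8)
The plan is to read \eqref{app_weak_eq} as a \emph{linear} transport equation for $f^{n+1}$ with coefficients frozen at the previous iterate $f^n$, and to build every bound by induction on $n$. First I would integrate along the characteristic flow $(X,V)(s)=(X,V)(s;t,x,v)$ generated by
\[
\frac{dX}{ds}=V,\qquad \frac{dV}{ds}=(\phi*(\rho_{f^n}u_{f^n}))(X,s)-V\,(\phi*\rho_{f^n})(X,s).
\]
Since $\phi\in W^{1,\infty}$ and $\rho_{f^n},\rho_{f^n}u_{f^n}\in L^1$ (the latter controlled by the kinetic energy), this field is globally Lipschitz in $x$ and \emph{affine in $v$}, so the flow is global and $C^1$. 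Combined with the fact that $M_\K[f^n]*\varphi_\K$ is smooth and bounded (the mollification removes the discontinuity of the indicator), rewriting \eqref{app_weak_eq} in nondivergence form,
\[
\frac{d}{ds}f^{n+1}(X,V,s)=\big(d\,(\phi*\rho_{f^n})(X,s)-1\big)f^{n+1}+(M_\K[f^n]*\varphi_\K)(X,V,s),
\]
and applying Duhamel yields an explicit representation formula. This formula gives existence, uniqueness, the $C^1$ regularity, and, since $f^\K_0\ge0$ and the source is nonnegative, positivity of $f^{n+1}$.

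Next, the algebraic identities of the regularized Maxwellian do the bookkeeping for the moment bounds. For the $L^1$ bound I would integrate \eqref{app_weak_eq} in $(x,v)$; the transport and force terms vanish, and $\intr M_\K[f^n]*\varphi_\K\,dv$ integrates in $x$ to $\int \rho^\K_{f^n}*\theta_\K\,dx\le\|f^n\|_{L^1}$ because $\rho^\K_{f^n}\le \rho_{f^n}*\theta_\K$, so $\tfrac{d}{dt}\|f^{n+1}\|_{L^1}\le\|f^n\|_{L^1}-\|f^{n+1}\|_{L^1}$, which closes by induction with a constant depending only on $\|f_0\|_{L^1}$. For the $L^\infty$ bound I would feed this into the representation formula: the uniform control $\phi*\rho_{f^n}\le\|\phi\|_{L^\infty}\|f^n\|_{L^1}$ keeps the exponential integrating factor bounded on $[0,T]$ and the source bounded by $1$, giving the uniform-in-$n$ (and $\K$) bound. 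The strictly positive lower bound $\rho_{f^n}\ge c_\K$ is where the added term $\K e^{-|v|^2}/(1+|x|^q)$ in $f^\K_0$ is essential: the representation formula bounds $f^{n+1}$ below by $e^{-t}$ times the strictly positive initial datum transported along the flow, and integrating this in $v$, using that the flow distorts phase-space volume by the factor $\exp(\int_0^t d\,\phi*\rho_{f^n}\,ds)$, produces a positive lower bound depending on $\K$ and $T$.

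The weighted $W^{1,\infty}_\ell$-estimate is the main obstacle, and it is precisely here that the exponential velocity weight with $\ell\in(1,2)$ earns its keep. Differentiating \eqref{app_weak_eq} in $x$ and $v$ produces, through the force field, terms that are affine in $v$, so a naive estimate loses a power of $v$ at each differentiation; this is the velocity growth flagged in the introduction. The idea is to estimate $e^{\lal v\ral^\ell}\pa^\alpha_x\pa^\beta_v f^{n+1}$ along characteristics, so that the $v$-weight is transported and one can extract a genuinely dissipative contribution from the damping term $-v\,(\phi*\rho_{f^n})$: differentiating the weight gives a factor $\sim \ell\lal v\ral^{\ell-1}$ which, paired with the $|v|$-linear coefficient, is absorbed for $\ell<2$ by the negative damping, while the inhomogeneous contribution of the mollified source stays bounded with $\K$-dependent constants coming from $\|\theta_\K\|_{W^{1,\infty}}$. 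Closing the resulting weighted Gr\"onwall inequality gives $\|f^n\|_{W^{1,\infty}_\ell}\le C_\K$ uniformly in $n$.

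Finally, for the kinetic-energy bound, the only estimate that must be uniform in $\K$, I would multiply \eqref{app_weak_eq} by $|v|^2/2$ and integrate. The force term splits into the cross term $\int(\rho_{f^{n+1}}u_{f^{n+1}})\cdot(\phi*(\rho_{f^n}u_{f^n}))\,dx$, bounded by the two kinetic energies via Cauchy--Schwarz and the preserved mass, plus the manifestly nonpositive damping $-\int|v|^2(\phi*\rho_{f^n})f^{n+1}$. For the source I would use the second-moment identity $\intr|v|^2 M_\K[f^n]\,dv=\rho^\K_{f^n}|u^\K_{f^n}|^2+dC_d(\rho^\K_{f^n})^\gamma$ together with Jensen's inequality for the mollifier to bound $\int|v|^2 M_\K[f^n]*\varphi_\K$ by the kinetic energy of $f^n$; crucially, the internal-energy part is controlled because $\int(\rho^\K_{f^n})^\gamma\le\int\rho_{f^n}^\gamma\le(\gamma-1)\int H(f^n)$ by the minimization principle \eqref{mini}, a bound that does not degenerate as $\K\to0$. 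Combining these with the absorbing term $-\int|v|^2 f^{n+1}$ and closing by induction and Gr\"onwall yields the $\K$-independent kinetic-energy bound $C_{T,f_0}$.
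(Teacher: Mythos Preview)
Your proposal is correct and follows essentially the same strategy as the paper: characteristics plus Duhamel for existence, positivity, the $L^\infty$ bound and the lower bound on $\rho$; direct integration for $L^1$ and the kinetic energy (the paper uses the pointwise inequality $v\cdot w\le\tfrac12|v|^2+\tfrac12|w|^2$ inside the convolution rather than your Cauchy--Schwarz, but both close the induction); and the exponential-weight-along-characteristics argument for the $W^{1,\infty}_\ell$ bound. Two places where the paper is more explicit than your sketch: for the lower bound on $\rho_{f^{n+1}}$ it does not invoke the phase-space Jacobian (which does not directly control the $v$-integral at fixed $x$) but instead bounds the backward characteristics by $|V(0)|\le C(1+|v|)$ and $|X(0)|\le C(1+|x|+|v|)$ and then integrates the transported initial profile explicitly; and in the $W^{1,\infty}_\ell$ step the bad commutator term $c_\phi|v|(\phi*\rho_{f^n})|\pa_x f^{n+1}||\nabla_v f^{n+1}|$ is absorbed into the dissipation $-\tfrac{\ell}{2}\langle v\rangle^{\ell-2}|v|^2(\phi*\rho_{f^n})$ via a Young inequality with conjugate exponents $(\ell,\tfrac{\ell}{\ell-1})$, so it is $\ell>1$ that makes the absorption possible, while $\ell<2$ is what keeps $\langle v\rangle^{\ell-2}\le 1$ and hence the constant terms bounded.
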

\begin{proof} We first readily check the existence and uniqueness of solution $f^n \in L^\infty(0,T; W^{1,\infty}_\ell(\R^d \times \R^d))$ to the regularized and linearized equation \eqref{app_weak_eq} by the standard existence theory for transport equations, see \cite{HT08} for instance. Thus, we only provide the bound estimates on $\|f^n\|_{L^1\cap L^\infty}$ and $\|f^n\|_{W^{1,\infty}_\ell}$. 

\vspace{.2cm}

	\noindent $\bullet$ ($\|f^{n+1}\|_{L^1\cap L^\infty}$ estimate):  We begin with the estimate of $\|f^{n+1}\|_{L^1}$. Since
\[
\inttr M_\K[f^{n}]* \varphi_\K \,dxdv = \int_{\R^d} \rho^{\K}_{f^n} \,dx \leq \int_{\R^d} \rho_{f^n} * \theta_\K\,dx \leq \int_{\R^d} \rho_{f^n}\,dx,
\]
we obtain
\[
\frac{d}{dt}\inttr f^{n+1}\,dxdv \leq \inttr f^n\,dxdv - \inttr f^{n+1}\,dxdv.
\]
This  implies $\|f^n(t)\|_{L^1} \leq \|f^\K_0\|_{L^1}$ for all $n \geq 0$ and $t \in [0,T]$. 

For the estimate of $L^\infty$-norm of $f^n$, we introduce the following backward characteristics: 
\[
Z^{n+1}(s):= (X^{n+1}(s), V^{n+1}(s)):= (X^{n+1}(s;t,x,v), V^{n+1}(s;t,x,v)),
\]
which solves
\begin{align}\label{char}
\begin{aligned}
\frac{d}{ds} X^{n+1}(s) &= V^{n+1}(s),\cr
\frac{d}{ds} V^{n+1}(s) &= F_\phi[f^n](Z^{n+1}(s),s)
\end{aligned}
\end{align}
with the terminal data: 
\[
Z^{n+1}(t) = (x,v).
\]
Note that the above characteristics is well-defined due to $\phi \in W^{1,\infty}(\R^d)$. Along that characteristics, we have
\bq\label{f0}
\frac{d}{ds} f^{n+1}(Z^{n+1}(s),s) = \lt(d (\phi * \rho_{f^n})(X^{n+1}(s),s)-1 \rt)f^{n+1}(Z^{n+1}(s),s) + (M_\K[f^n] * \varphi_\K)(Z^{n+1}(s),s).
\eq
Thus,
\begin{align}\label{mild form}\begin{split}
 f^{n+1}(x,v,t) &= f^\K_0(Z^{n+1}(0)) \exp\lt(-\int_0^t (d (\phi * \rho_{f^n})(X^{n+1}(s),s)-1)\,ds \rt)\cr
 &\quad + \int_0^t (M_\K[f^n] * \varphi_\K)(Z^{n+1}(s),s)\exp\lt(-\int_s^t (d (\phi * \rho_{f^n})(X^{n+1}(\tau),\tau)-1)\,d\tau \rt)\,ds.
\end{split}\end{align}
Since $0 \leq  M_\K[f^n] \leq 1$ and $\|f^n(t)\|_{L^1} \leq \|f^\K_0\|_{L^1}$, we obtain $f^{n+1} \geq 0$ and 
\[
 f^{n+1}(x,v,t) \leq f^\K_0(Z^{n+1}(0)) e^{d\|\phi\|_{L^\infty}\|f^\K_0\|_{L^1}T} + Te^{d\|\phi\|_{L^\infty}\|f^\K_0\|_{L^1}T}. 
\]
Thus $\|f^{n+1}(t)\|_{L^\infty} \leq C(\|f^\K_0\|_{L^\infty} +1)$ for some $C>0$ independent of $n$ and $\K$. 

\vspace{.2cm}

\noindent $\bullet$ (Kinetic energy estimate): Note that \begin{align*}
&\frac12\frac{d}{dt} \inttr |v|^2 f^{n+1}\,dxdv + \frac12\inttr |v|^2 f^{n+1}\,dxdv \cr
&\quad = \inttr v \cdot F_{\phi}[f^n] f^{n+1}\,dxdv + \frac12\inttr |v|^2 (M_\K[f^n]* \varphi_\K)\,dxdv\cr
&\quad =: I_1 + I_2,
\end{align*}
where $I_1$ can be estimated as
\begin{align*}
I_1 &= \inttr v \cdot \phi * (\rho_{f^n} u_{f^n}) f^{n+1}\,dxdv-\inttr |v|^2 \phi * \rho_{f^n}  f^{n+1}\,dxdv \cr
&\leq \frac 12 \inttr   \phi * (\rho_{f^n} |u_{f^n}|^2) f^{n+1}\,dxdv-\frac 12\inttr |v|^2 \phi * \rho_{f^n}  f^{n+1}\,dxdv\cr
&\leq \frac 12 \|\phi * (\rho_{f^n} |u_{f^n}|^2)\|_{L^\infty}  \|f^{n+1}\|_{L^1}\cr
&\leq \frac 12\|\phi\|_{L^\infty}\|f^\K_0\|_{L^1} \intr  \rho_{f^n} |u_{f^n}|^2\,dx.  
\end{align*}
For $I_2$, we see that
\begin{align*}
I_2&\le\inttr |v|^2 M_\K[f^n]dxdv+\K^2\|f^n\|_{L^1} \cr
&\le \intr \rho^{\K}_{f^n} |u^{\K}_{f^n}|^2 + (\rho^{\K}_{f^n})^\gamma dx+\|f^\K_0\|_{L^1} \cr
&\leq \intr \rho_{f^n}  |u_{f^n} |^2 + (\rho_{f^n} )^\gamma dx+\|f^\K_0\|_{L^1},
\end{align*}
where we used
\[
\intr \rho^{\K}_{f^n} |u^{\K}_{f^n}|^2\,dx \leq \intr \frac{|(\rho_{f^n} u_{f^n}) * \theta_\K|^2}{\rho_{f^n} * \theta_\K}\,dx \leq \intr \rho_{f^n}  |u_{f^n} |^2 \,dx
\]
and
\[
\|\rho^{\K}_{f^n}\|_{L^\gamma} \leq \|\rho_{f^n} * \theta_\K\|_{L^\gamma} \leq \|\rho_{f^n}\|_{L^\gamma}.
\]
Combining the estimates for $I_1$ and $I_2$ with  \eqref{mini}, we obtain
\[
I_1+I_2 \leq C\left(\intr \rho_{f^n}  |u_{f^n} |^2 + (\rho_{f^n} )^\gamma\,dx +\|f^\K_0\|_{L^1}\right) \leq C\left(\inttr |v|^2 f^n\,dxdv+\|f^\K_0\|_{L^1}\right) 
\]
with $\gamma = 1+ \frac2d$. Therefore, we conclude that
$$
\frac{d}{dt} \inttr |v|^2 f^{n+1}\,dxdv + \inttr |v|^2 f^{n+1}\,dxdv \le   C\left(\inttr |v|^2 f^n\,dxdv+\|f^\K_0\|_{L^1}\right), 
$$
and subsequently, solving the above yields the uniform bound estimate on the kinetic energy of $f^{n+1}$.

\vspace{.2cm}

\noindent $\bullet$ (Lower bound on $\rho_{f^{n+1}}$): Since $M_\K *\phi_\K$ is positive, it can be easily obtained from \eqref{mild form} that
\begin{align}\begin{split}\label{lower bound of rho}
\rho_{f^{n+1}}(x,t) &\ge \intr f^\K_0(Z^{n+1}(0)) \exp\lt(-\int_0^t (d (\phi * \rho_{f^n})(X^{n+1}(s),s)-1)\,ds \rt)\,dv\cr
&\ge  e^{-(d\|\phi\|_{L^\infty}\|f^\K_0\|_{L^1}+1)T}\intr f^\K_0(Z^{n+1}(0)) \,dv\cr
&\ge  e^{-(d\|\phi\|_{L^\infty}\|f^\K_0\|_{L^1}+1)T}\K\intr \frac{e^{-|V^{n+1}(0)|^2}}{1+|X^{n+1}(0)|^q}  \,dv.
\end{split}\end{align}
On the other hand, solving \eqref{char} gives
\begin{align*}
V^{n+1}(s)&=\exp\lt(\int_s^t (\phi *\rho_{f^n})(X^{n+1}(\sigma),\sigma) \,d\sigma \rt)V^{n+1}(t)\cr
&-\int_s^t \exp\lt( \int_s^\tau(\phi *\rho_{f^n})(X^{n+1}(\sigma),\sigma) \,d\sigma\rt) (\phi *\rho_{f^n}u_{f^n}) (X^{n+1}(\tau),\tau) \, d\tau.
\end{align*}
Using the boundedness of $\phi$ and the uniform kinetic energy estimate of $f^n$, we find
\begin{align*}
|V^{n+1}(s)|&\le e^{\|\phi\|_{L^\infty}\|f_0^\K\|_{L^1}T}|v|+C_{T,f_0}e^{\|\phi\|_{L^\infty}\|f_0^\K\|_{L^1}T}\|\phi\|_{L^\infty},
\end{align*}
and hence
$$
e^{-|V^{n+1}(0)|^2} \ge C e^{-C|v|^2}.
$$
Also, we get
$$
|X^{n+1}(s)|\le |x|+\int_s^t|V^{n+1}(\tau)|\,d\tau \le C(1+|x|+|v|),
$$
yielding
$$
\frac{1}{1+|X^{n+1}(0)|^q} \ge \frac{C}{(1+|x|^q)(1+|v|^q)}.
$$
Therefore, going back to \eqref{lower bound of rho}, we conclude that
$$
\rho_{f^{n+1}}(x,t) \ge  \frac{C\K}{1+|x|^q}.
$$

\vspace{.2cm}

\noindent $\bullet$ ($\|f^{n+1}\|_{W^{1,\infty}_\ell}$ estimate): We use the characteristics defined in \eqref{char}. First, we have
\begin{align*}
\frac12\frac{d}{dt}\lt| e^{ \langle V^{n+1}(t)\rangle^\ell} f^{n+1}(Z^{n+1}(t),t)\rt|^2 & =\ell e^{2\langle V^{n+1}(t)\rangle^\ell} (f^{n+1}(Z^{n+1}(t),t))^2  \langle V^{n+1}(t) \rangle^{\ell -2} V^{n+1}(t)\cdot \frac{d V^{n+1}(t)}{dt}  \\
&\quad + e^{2\langle V^{n+1}(t)\rangle^\ell} (f^{n+1}(Z^{n+1}(t),t)) \frac{d}{dt}(f^{n+1}(Z^{n+1}(t),t))\\
&=: J_1 + J_2.
\end{align*}
Note that 
\[
v \cdot F_\phi[f^n] = v\cdot \phi * (\rho_{f^n} u_{f^n}) - |v|^2 \phi * \rho_{f^n} \leq \frac12\phi * (\rho_{f^n} |u_{f^n}|^2) - \frac{|v|^2}2 \phi * \rho_{f^n}. 
\]
Then, it follows from \eqref{mini} and the $L^1$-estimate of $(1+|v|^2)f^n$  that
\[
v \cdot F_\phi[f^n] \leq \|\phi\|_{L^\infty}\intr\rho_{f^n} |u_{f^n}|^2 \,dx -  \frac{|v|^2}2 \phi * \rho_{f^n}\leq C -  \frac{|v|^2}2 \phi * \rho_{f^n}.
\]
This gives
\begin{align*}
J_1 &= \ell e^{2\langle V^{n+1}(t)\rangle^\ell} (f^{n+1}(Z^{n+1}(t),t))^2  \langle V^{n+1}(t) \rangle^{\ell -2} V^{n+1}(t)\cdot F_\phi[f^n](Z^{n+1}(t),t)\cr
&\leq C_\K e^{2\langle V^{n+1}(t)\rangle^\ell} (f^{n+1}(Z^{n+1}(t),t))^2 \cr
&\quad - \frac \ell 2 e^{2\langle V^{n+1}(t)\rangle^\ell} (f^{n+1}(Z^{n+1}(t),t))^2 \phi * \rho_{f^n}(X^{n+1}(t),t) \langle V^{n+1}(t) \rangle^{\ell -2} |V^{n+1}(t)|^2. 
\end{align*}
For the estimate of $J_2$, we notice that 
\[
|e^{\langle v\rangle^\ell} M_\K[f^n](x,v)| \leq C_\K
\]
for some $C_\K >0$ independent of $n$, due to $\|\rho^{\K}_{f^n}\|_{L^\infty}, \|u^{\K}_{f^n}\|_{L^\infty} \leq C_\K$. This further implies
\[
|e^{\langle v\rangle^\ell} M_\K[f^n] * \varphi_\K| \leq C_\K \inttr e^{\langle v - w\rangle^\ell} M_\K[f^n](x-y,v-w) \varphi_\K(y,w)\,dydw \leq C_\K.
\]
This together with \eqref{f0} yields
\begin{align*}
J_2 &= e^{2\langle v \rangle^\ell} (f^{n+1}(z,t)) \lt( (d \phi * \rho_{f^n}(x) - 1)f^{n+1}(z,t) + M_\K[f^n] * \varphi(z) \rt)\bigg|_{(z=Z^{n+1}(t))}\cr
&\leq C e^{2\langle V^{n+1}(t)\rangle^\ell} (f^{n+1}(Z^{n+1}(t),t))^2 + C_\K e^{\langle V^{n+1}(t)\rangle^\ell} (f^{n+1}(Z^{n+1}(t),t))\cr
&\leq C_\K \lt(e^{2\langle V^{n+1}(t)\rangle^\ell} (f^{n+1}(Z^{n+1}(t),t))^2 + 1\rt).
\end{align*}
Hence we have
\[
\frac12\frac{d}{dt}\lt| e^{ \langle V^{n+1}(t)\rangle^\ell} f^{n+1}(Z^{n+1}(t),t)\rt|^2 \leq C_\K \lt(\lt| e^{ \langle V^{n+1}(t)\rangle^\ell} f^{n+1}(Z^{n+1}(t),t)\rt|^2 +1 \rt)
\]
and applying the Gr\"onwall's lemma to the above concludes
\bq\label{est_f0}
\sup_{0 \leq t \leq T}\|f^{n+1}(\cdot,t)\|_{L^\infty_\ell} \leq C_\K \|f^\K_0\|_{L^\infty_\ell} + C_\K,
\eq
where $C_\K >0$ is independent of $n$.

We next estimate the first-order derivative of $f^{n+1}$ in our weighted space. Note that $\pa_x f^{n+1}$ satisfies
\begin{align}\label{fx1}
\begin{aligned}
&\pa_t \pa_x f^{n+1} + v \cdot \nabla_x \pa_x f^{n+1} + F_\phi[f^n] \cdot \nabla_v \pa_x f^{n+1} \cr
&\quad = d(\pa_x \phi * \rho_{f^n}) f^{n+1} + (d\phi * \rho_{f^n} - 1)\pa_x f^{n+1} - \pa_x F_\phi [f^n] \cdot \nabla_v f^{n+1} + M_\K[f^n]*\pa_x \varphi_\K.
\end{aligned}
\end{align}
Then similarly as before, we estimate
\begin{align*}
&\frac12\frac{d}{dt}\lt| e^{ \langle V^{n+1}(t)\rangle^\ell} \pa_x f^{n+1}(Z^{n+1}(t),t)\rt|^2\\
&\quad =\ell e^{2\langle V^{n+1}(t)\rangle^\ell} |\pa_x f^{n+1}(Z^{n+1}(t),t)|^2  \langle V^{n+1}(t) \rangle^{\ell -2} V^{n+1}(t)\cdot \frac{d V^{n+1}(t)}{dt}  \\
&\qquad + e^{2\langle V^{n+1}(t)\rangle^\ell} (\pa_xf^{n+1}(Z^{n+1}(t),t)) \frac{d}{dt}(\pa_xf^{n+1}(Z^{n+1}(t),t))\\
&\quad =: K_1 + K_2,
\end{align*}
where
\begin{align*}
K_1 &\leq C_\K e^{2\langle V^{n+1}(t)\rangle^\ell}|\pa_x f^{n+1}(Z^{n+1}(t),t)|^2 \cr
&\quad - \frac \ell 2 e^{2\langle V^{n+1}(t)\rangle^\ell} |\pa_x f^{n+1}(Z^{n+1}(t),t)|^2 \phi * \rho_{f^n}(X^{n+1}(t),t) \langle V^{n+1}(t) \rangle^{\ell -2} |V^{n+1}(t)|^2. 
\end{align*}
For $K_2$, we observe that the first three terms on the right hand side of \eqref{fx1} can be bounded as
\begin{align*}
| d(\pa_x \phi * \rho_{f^n}) f^{n+1}| &\leq d\|\nabla \phi\|_{L^\infty} \|f^\K_0\|_{L^1}\|f^{n+1}\|_{L^\infty} \cr
&\leq C(\|f_0\|_{L^1} +1),\cr
|(d\phi * \rho_{f^n} - 1)\pa_x f^{n+1} | &\leq (d\|\phi\|_{L^\infty}\|f^\K_0\|_{L^1} + 1)|\pa_x f^{n+1} | \cr
&\leq C|\pa_x f^{n+1} |,
\end{align*}
and
\begin{align*}
|\pa_x F_\phi [f^n] \cdot \nabla_v f^{n+1}| &\leq \lt(\|\nabla \phi\|_{L^\infty}\|(1+|v|^2)f^n\|_{L^1} + c_\phi |v| \phi * \rho_{f^n} \rt)|\nabla_v f^{n+1}| \cr
&\leq  \lt(C + c_\phi |v| \phi * \rho_{f^n} \rt)|\nabla_v f^{n+1}|,
\end{align*}
where we used the assumption on $\phi$ in \eqref{asp_phi}. Then we obtain
\begin{align*}
K_2 &\leq C_\K \lt(e^{2\langle V^{n+1}(t)\rangle^\ell} |\pa_xf^{n+1}(Z^{n+1}(t),t)|^2 + 1\rt) \cr
&\quad + c_\phi  e^{2\langle V^{n+1}(t) \rangle^\ell} \phi * \rho_{f^n}(X^{n+1}(t),t)  |\pa_xf^{n+1}(Z^{n+1}(t),t)||V^{n+1}(t)|  |\nabla_v f^{n+1}(Z^{n+1}(t),t)|.
\end{align*}
In order to handle the second term, we use
\[
2^{\frac{\ell -2}{2}}|v|^\ell \le \langle v\rangle^{\ell -2} |v|^2, \quad \mbox{for}\quad |v|\ge 1\quad\mbox{and} \quad \ell  \in (1,2),
\]
to get
\begin{align*}
\phi * \rho_{f^n}   |\pa_xf^{n+1}| |v|  |\nabla_v f^{n+1}| & \leq (\phi * \rho_{f^n} ) |\pa_xf^{n+1}|  |\nabla_v f^{n+1}| \mathds{1}_{\{|v| \leq 1\}} \cr
&\quad + \delta \lt( (\phi * \rho_{f^n} )^{\frac1\ell }   |\pa_xf^{n+1}| |v|  |\nabla_v f^{n+1}|^{\frac2\ell  - 1}\rt)^\ell \mathds{1}_{\{|v| \geq 1\}} \cr
&\quad + \delta \lt( (\phi * \rho_{f^n} )^{1-\frac1\ell }     |\nabla_v f^{n+1}|^{2-\frac2\ell }\rt)^\frac{\ell }{\ell -1} \mathds{1}_{\{|v| \geq 1\}} \cr
&\leq C_\K \lt(|\pa_xf^{n+1}|^2 +  |\nabla_v f^{n+1}|^2 \rt)\cr
&\quad + \delta (\phi * \rho_{f^n} )\lal v \ral^{\ell -2}|v|^2 \lt(\frac{\ell }{2^\frac{\ell }{2}} |\pa_xf^{n+1}|^2  + \frac{2-\ell }{2^\frac{\ell }{2}}  |\nabla_v f^{n+1}|^2\rt),
\end{align*}
where $\delta > 0$ will be determined later. This together with \eqref{est_f0} yields
\begin{align*}
K_2 &\leq C_\K e^{2\langle V^{n+1}(t)\rangle^\ell}  \lt(   |\pa_x f^{n+1}(Z^{n+1}(t),t)|^2  + |\nabla_v f^{n+1}(Z^{n+1}(t),t)|^2\rt) + C_\K \cr
&\quad + c_\phi \delta e^{2\langle V^{n+1}(t) \rangle^\ell} \phi * \rho_{f^n}(X^{n+1}(t),t) \lal V^{n+1}(t) \ral^{\ell -2}|V^{n+1}(t)|^2\cr
&\hspace{3cm} \times \lt(\frac{\ell }{2^\frac{\ell }{2}} |\pa_xf^{n+1}(Z^{n+1}(t),t)|^2  + \frac{2-\ell }{2^\frac{\ell }{2}}  |\nabla_v f^{n+1}(Z^{n+1}(t),t)|^2\rt).
\end{align*}
Thus, we obtain
\begin{align}\label{est_fx1}
\begin{aligned}
&\frac12\frac{d}{dt}\lt| e^{ \langle V^{n+1}(t)\rangle^\ell} \pa_x f^{n+1}(Z^{n+1}(t),t)\rt|^2\\
&\quad + \frac \ell 2\lt(1 -  \delta c_\phi 2^{1 - \frac \ell 2} \rt) e^{2\langle V^{n+1}(t)\rangle^\ell} |\pa_x f^{n+1}(Z^{n+1}(t),t)|^2 \phi * \rho_{f^n}(X^{n+1}(t),t) \langle V^{n+1}(t) \rangle^{\ell -2} |V^{n+1}(t)|^2\cr
&\qquad \leq C_\K e^{2\langle V^{n+1}(t)\rangle^\ell} \lt(  |\pa_x f^{n+1}(Z^{n+1}(t),t)|^2  + |\nabla_v f^{n+1}(Z^{n+1}(t),t)|^2\rt) + C_\K  \cr
&\quad \qquad + \delta  c_\phi  \frac{2-\ell }{2^\frac{\ell }{2}} e^{2\langle V^{n+1}(t) \rangle^\ell} \phi * \rho_{f^n}(X^{n+1}(t),t) \lal V^{n+1}(t) \ral^{\ell -2}|V^{n+1}(t)|^2|\nabla_v f^{n+1}(Z^{n+1}(t),t)|^2 .
\end{aligned}
\end{align}
For the estimate of $\|\pa_v f^{n+1}\|_{L^\infty_\ell}$, we notice that $\pa_v f^{n+1}$ satisfies
\begin{align*}
\pa_t \pa_v f^{n+1} + v \cdot \nabla_v \pa_x f^{n+1} + F_\phi[f^n] \cdot \nabla_v \pa_v f^{n+1}  = -\pa_x f^{n+1} + ((d + 1)\phi * \rho_{f^n} - 1)\pa_v f^{n+1}  + M_\K[f^n]*\pa_v \varphi_\K
\end{align*}
and by using a similar argument as above, we deduce 
\begin{align}\label{est_fv1}
\begin{aligned}
&\frac12\frac{d}{dt}\lt| e^{ \langle V^{n+1}(t)\rangle^\ell} \pa_v f^{n+1}(Z^{n+1}(t),t)\rt|^2\\
&\quad + \frac \ell 2  e^{2\langle V^{n+1}(t)\rangle^\ell} |\pa_v f^{n+1}(Z^{n+1}(t),t)|^2 \phi * \rho_{f^n}(X^{n+1}(t),t) \langle V^{n+1}(t) \rangle^{\ell -2} |V^{n+1}(t)|^2\cr
&\qquad \leq C_\K  e^{2\langle V^{n+1}(t)\rangle^\ell} \lt( |\pa_x f^{n+1}(Z^{n+1}(t),t)|^2  + |\pa_v f^{n+1}(Z^{n+1}(t),t)|^2\rt) + C_\K. 
\end{aligned}
\end{align}
We now combine \eqref{est_fx1} and \eqref{est_fv1} and choose $\delta > 0$ small enough to have 
\[
\frac{d}{dt}\| f^{n+1}(\cdot,\cdot,t)\|_{W^{1,\infty}_\ell}^2 \leq C_\K \| f^{n+1}(\cdot,\cdot,t)\|_{W^{1,\infty}_\ell}^2 + C_\K
\]
and hence
\[
\sup_{0 \leq t \leq T}\| f^{n+1}(\cdot,\cdot,t)\|_{W^{1,\infty}_\ell} \leq C_\K   \|f^\K_0\|_{W^{1,\infty}_\ell} + C_\K.
\]
\end{proof}

\subsection{Cauchy estimates} Next, we show that $\lal v \ral^2 f^n$, where $f^n$ is the solution of \eqref{app_weak_eq}, is Cauchy in  $L^\infty(0,T; L^1(\R^d \times \R^d))$. For this, we observe that

\begin{align*}
& \frac{d}{dt}\inttr \lal v \ral^2 |f^{n+1} - f^n|\,dxdv+\inttr \lal v \ral^2 |f^{n+1} - f^n|\,dxdv \cr
&\quad = - \inttr \lal v \ral^2 {\rm sgn}(f^{n+1} - f^n) \nabla_v \cdot (F_\phi[f^n] f^{n+1} - F_\phi[f^{n-1}]f^n)\,dxdv\cr
&\qquad +  \inttr \lal v \ral^2 {\rm sgn}(f^{n+1} - f^n) (M_\K[f^n] - M_\K[f^{n-1}])*\varphi_\K\,dxdv\cr
&\quad =: I_1 + I_2.
\end{align*}
For the estimate of $I_1$, we divide it into three terms:
\begin{align*}
I_1 &= - \inttr \lal v \ral^2 {\rm sgn}(f^{n+1} - f^n) \nabla_v \cdot ((F_\phi[f^n] - F_\phi[f^{n-1}])f^{n+1})\,dxdv\cr
&\quad + 2\inttr (v \cdot F_\phi[f^{n-1}]) |f^{n+1} - f^n|\,dxdv\cr
&=d \inttr \lal v \ral^2 {\rm sgn}(f^{n+1} - f^n) (\phi * (\rho_{f^n} - \rho_{f^{n-1}}))f^{n+1}\,dxdv\cr
&\quad - \inttr \lal v \ral^2 {\rm sgn}(f^{n+1} - f^n)  (F_\phi[f^n] - F_\phi[f^{n-1}])\cdot \nabla_v f^{n+1}\,dxdv \cr
&\quad + 2\inttr (v \cdot \phi * (\rho_{f^n} u_{f^n}) - |v|^2 \phi * \rho_{f^n}) |f^{n+1} - f^n|\,dxdv\cr
&=:I_1^1+I_1^2+ I_1^3,
\end{align*}
where
\[
I_1^1 \leq d\|\phi\|_{L^\infty}\|\rho_{f^n} - \rho_{f^{n-1}}\|_{L^1} \inttr \lal v \ral^2 f^{n+1}\,dxdv \leq C_\K \inttr  |f^n - f^{n-1}|\,dxdv,
\]
\begin{align*}
I_1^2  &\leq \|\phi * (\rho_{f^n} u_{f^n} - \rho_{f^{n-1}} u_{f^{n-1}})\|_{L^\infty} \inttr \lal v \ral^2 |\nabla_v f^{n+1}|\,dxdv\cr
&\quad + \|\phi * (\rho_{f^n}   - \rho_{f^{n-1}}  )\|_{L^\infty} \inttr \lal v \ral^3 |\nabla_v f^{n+1}|\,dxdv\cr
&\leq C  \lt(\|\rho_{f^n} u_{f^n} - \rho_{f^{n-1}} u_{f^{n-1}}\|_{L^1} + \|\rho_{f^n}   - \rho_{f^{n-1}} \|_{L^1} \rt)\cr
&\leq C   \inttr \lal v \ral^2 |f^n - f^{n-1}|\,dxdv
\end{align*}
due to Proposition \ref{prop_lin}, and
\begin{align*}
I_1^3  &\leq 2 (\|\phi * (\rho_{f^n} u_{f^n})\|_{L^\infty}+\|\phi * \rho_{f^n} \|_{L^\infty}) \inttr \lal v\ral^2 |f^{n+1} - f^n|\,dxdv\cr
&\leq C \inttr \lal v \ral^2 |f^{n+1} - f^n|\,dxdv.
\end{align*}
We then use \cite[Lemma 2.1]{CHpre} to estimate
\begin{align*}
I_2 &\leq \inttr \lal v \ral^2  |(M_\K[f^n] - M_\K[f^{n-1}])*\varphi_\K|\,dxdv\cr
&\leq C_\K \inttr \lal v \ral^2  |M_\K[f^n] - M_\K[f^{n-1}]|\,dxdv\cr
&\leq C_\K \lt(\|\rho^{\K}_{f^n} - \rho^{\K}_{f^{n-1}}\|_{L^1} +\|u^{\K}_{f^n} - u^{\K}_{f^{n-1}}\|_{L^1}\rt).
\end{align*}
Since the uniform-in-$n$ boundedness of $\|f^n\|_{W^{1,\infty}_\ell}$ in Proposition \ref{prop_lin} gives the boundedness of $\rho_{f^n} u_{f^n}$ uniformly in $n$, we can further estimate
\[
|\rho^{\K}_{f^n} - \rho^{\K}_{f^{n-1}}| \leq C_\K |(\rho_{f^n}   - \rho_{f^{n-1}}  )* \theta_\K|
\]
and
\[
|u^{\K}_{f^n} - u^{\K}_{f^{n-1}}| \leq C_\K\lt( |(\rho_{f^n} u_{f^n} - \rho_{f^{n-1}} u_{f^{n-1}})* \theta_\K| + |(\rho_{f^n}   - \rho_{f^{n-1}}  )* \theta_\K|  \rt).
\]
Thus, we obtain
\[
I_2 \leq C_\K \inttr \lal v \ral^2 |f^n - f^{n-1}|\,dxdv,
\]
where $C_\K > 0$ is independent of $n$. Hence $\lal v \ral^2 f^n$ is a Cauchy sequence in $L^\infty(0,T; L^1(\R^d \times \R^d))$. Thus, for a fixed $\K>0$ there exists a limiting function $\lal v \ral^2 f \in L^\infty(0,T; L^1(\R^d \times \R^d))$ such that 
\begin{equation}\label{conv_0}
\sup_{0 \leq t \leq T} \|\lal v \ral^2(f^n - f)(\cdot,\cdot,t)\|_{L^1} \to 0 \quad \mbox{as} \quad n \to \infty.
\end{equation}
From this, one can deduce that
\begin{equation}\label{conv_1}
\sup_{0\le t\le T}(\|(\rho_{f^n}-\rho_f)(\cdot,t)\|_{L^p}+\|(\rho_{f^n} u_{f^n}-\rho_{f}u_f)(\cdot,t)\|_{L^p})\rightarrow 0 \quad \mbox{as}\quad n\rightarrow \infty
\end{equation}
for any $p\in [1,\infty)$.

\subsection{Existence of weak solutions to the regularized equation}  
 In this subsection, we prove that the limiting function $f$ is in fact the weak solution to the regularized equation \eqref{reg_weak_eq} in the sense of Definition \ref{def_weak}. We then provide the kinetic energy estimate:
$$\begin{aligned}
&\frac12\inttr |v|^2 f\,dxdv + \frac12\int_0^t \inttr |v|^2 (f - M_\K[f]*\varphi_\K)\,dxdvds  \cr
&\quad = \int_0^t  \iint_{\R^{d} \times \R^{d}} f v\cdot F_{\phi}[f]  \,dxdvds + \frac12\inttr |v|^2 f^\K_0\,dxdv.
\end{aligned}$$
We start with the weak formulation of \eqref{reg_weak_eq} that for any  $\eta \in \mc^1_c(\R^d \times \R^d \times [0,T])$ with $\eta(x,v,T) = 0$,
	\begin{align*}
&- \iint_{\R^d\times\R^d} f_{0}^\K \eta(x,v,0)\,dxdv - \int_0^T \iint_{\R^d\times\R^d} f^{n+1} (\pa_t \eta + v \cdot \nabla_x \eta+F_{\phi}[f^n]\cdot\nabla_v\eta)\,dxdvdt \cr
&\qquad= \int_0^T \iint_{\R^d\times\R^d} \lt(M_\K[f^n]*\varphi_\K - f^{n+1}\rt)\eta\,dxdvdt.
\end{align*}

Since the first three terms on the left hand side and the second term on the right hand side are linear, it suffices to deal with terms with the velocity alignment and the equilibrium function. We first observe that
\begin{align*}
&\iint_{\R^d\times\R^d} |M_\K[f^n]*\varphi_\K\eta-M_\K[f]*\varphi_\K \eta| \,dxdv\cr
&\quad = \iiiint_{\R^{2d}\times\R^{2d}} \{M_\K[f^n]-M_\K[f]\}(x-y,v-w)\varphi_\K(y,w)\eta(x,v) \,dxdydvdw \cr
&\le  \|\eta\|_{L^\infty}\iint_{\R^d\times\R^d}|M_\K[f^n]-M_\K[f]|\,dxdv\iint_{\R^d\times\R^d}   \varphi_\K(y,w) \,dydw\cr
&\le C\lt(\|\rho^{\K}_{f^n}-\rho^\K_f\|_{L^1}+\|u^{\K}_{f^n}-u^\K_f\|_{L^1}\rt)
\end{align*}
where we used \cite[Lemma 2.1]{CHpre}. This together with \eqref{conv_1} gives the desired result. Now it only remains to show that
\begin{align*}
\int_0^T \iint_{\R^d\times\R^d} f^{n+1} F_{\phi}[f^n]\cdot\nabla_v\eta\,dxdvdt\rightarrow\int_0^T \iint_{\R^d\times\R^d} f F_{\phi}[f]\cdot\nabla_v\eta\,dxdvdt\quad\mbox{as}\quad n\rightarrow \infty.
\end{align*}
Observe that
\begin{align}\label{alignment}\begin{split}
& \iint_{\R^d\times\R^d} (f^{n+1} F_{\phi}[f^n]-f F_{\phi}[f])\cdot\nabla_v\eta\,dxdv\cr
&\quad =  \iint_{\R^d\times\R^d} (f^{n+1} (\phi * (\rho_{f^n} u_{f^n}) - v \phi * \rho_{f^n})-f (\phi * (\rho u) - v \phi * \rho))\cdot\nabla_v\eta\,dxdv\cr
&\quad =  \iint_{\R^d\times\R^d} f^{n+1} \phi * (\rho_{f^n} u_{f^n}-\rho u)  \cdot\nabla_v\eta\,dxdv+\iint_{\R^d\times\R^d}(f^{n+1} -f  )\phi * (\rho u)\cdot\nabla_v\eta\,dxdv\cr
&\qquad - \iint_{\R^d\times\R^d} vf^{n+1}  \phi * (\rho_{f^n}-    \rho)\cdot\nabla_v\eta\,dxdv- \iint_{\R^d\times\R^d} v(f^{n+1}   -f   )\phi * \rho\cdot\nabla_v\eta\,dxdv.
\end{split}\end{align}
This yields
\begin{align*}
 &\iint_{\R^d\times\R^d} (f^{n+1} F_{\phi}[f^n]-f F_{\phi}[f])\cdot\nabla_v\eta\,dxdv\cr
 &\quad \le   C_\K(\|\rho_{f^n} u_{f^n}-\rho_f u_f \|_{L^1}+\|\rho_{f^n}-    \rho_f\|_{L^1}+  \|\lal v\ral ^2 (f^{n+1}   -f   )\|_{L^1}).
\end{align*}
We now use \eqref{conv_0} and \eqref{conv_1} to obtain the desired result. 

Next, we will prove the kinetic energy estimate. We see that
\begin{align*}
&\frac 12\iint_{\R^d\times\R^d}|v|^2f^{n+1}(x,v,t)\,dxdv+\frac 12\int_0^t\iint_{\R^d\times\R^d}|v|^2 f^{n+1}\,dxdvds\cr
&\quad = \int_0^t  \iint_{\T^{d} \times \R^{d}}  v\cdot F_{\phi}[f^n]f^{n+1}  \,dxdvds + \frac 12\int_0^t\iint_{\R^d\times\R^d}|v|^2 (M_\K [f^n]* \varphi_\K)\,dxdvds \cr
&\qquad +\frac12\inttr |v|^2 f_0\,dxdv,
\end{align*}
thus it suffices to deal with the first two terms on the right hand side. In the same manner as in \eqref{alignment}, we get
\begin{align*}
& \iint_{\R^d\times\R^d} v\cdot (f^{n+1} F_{\phi}[f^n]-f F_{\phi}[f])\,dxdv\cr
&\quad =  \iint_{\R^d\times\R^d} f^{n+1} \phi * (\rho_{f^n} u_{f^n}-\rho u)  \cdot v\,dxdv+\iint_{\R^d\times\R^d}(f^{n+1} -f  )\phi * (\rho u)\cdot v\,dxdv\cr
&\qquad - \iint_{\R^d\times\R^d} |v|^2f^{n+1}  \phi * (\rho_{f^n}-    \rho)\,dxdv- \iint_{\R^d\times\R^d} |v|^2(f^{n+1}   -f   )\phi * \rho\,dxdv,
\end{align*}
which, combined with Proposition \ref{prop_lin}, leads to
$$
 \iint_{\R^d\times\R^d} v\cdot f^{n+1} F_{\phi}[f^n]\,dxdv\rightarrow  \iint_{\R^d\times\R^d} v\cdot f F_{\phi}[f]\,dxdv\quad\mbox{as}\quad n\rightarrow \infty.
$$
For the estimate of second term, we see that
\begin{align*}
&\iint_{\R^d\times\R^d}|v|^2 ((M_\K [f^n]-M_\K [f])* \varphi_\K)\,dxdv\cr
&\quad =\iint_{\R^{2d}\times\R^{2d}}|v|^2 (M_\K [f^n]-M_\K [f])(x-y,v-w) \varphi_\K(y,w)\,dydxdwdv\cr
&\quad \le \iint_{\R^{2d}\times\R^{2d}}|v-w|^2 |M_\K [f^n]-M_\K [f]|(x-y,v-w) \varphi_\K(y,w)\,dydxdwdv\cr
&\qquad + \iint_{\R^{2d}\times\R^{2d}}|w|^2 |M_\K [f^n]-M_\K [f]|(x-y,v-w) \varphi_\K(y,w)\,dydxdwdv\cr
&\quad \le C_\K(\|\rho^{\K}_{f^n}-\rho^{\K}_f \|_{L^1}+\|u^{\K}_{f^n}-u^\K_f \|_{L^1}).
\end{align*}
In the last line, we used  \cite{CHpre}. Thus we conclude that
$$
\iint_{\R^d\times\R^d}|v|^2 M_\K [f^n]* \varphi_\K\,dxdv\rightarrow \iint_{\R^d\times\R^d}|v|^2 M_\K [f]* \varphi_\K\,dxdv\quad \mbox{as}\quad n\rightarrow \infty
$$
which completes the proof.

\begin{remark}

Here we reveal the uniform-in-$\K$ bound estimate on kinetic energy of the limiting function $f^\K$ for clarity. Even though it can be directly obtained from Proposition \ref{prop_lin}, we revisit it once again to more clearly see the relevance to the initial data. For this, we note that
\begin{align*}
&\frac12\frac{d}{dt} \inttr |v|^2 f^\K\,dxdv + \frac12\inttr |v|^2 f^\K\,dxdv \cr
&\quad = \inttr v \cdot F_{\phi}[f^\K] f^\K\,dxdv + \frac12\inttr |v|^2 (M_\K[f^\K]* \varphi_\K)\,dxdv\cr
&\quad =: I_1 + I_2,
\end{align*}
where $I_1$ can be estimated as
\[
I_1 = -\frac12\iiiint_{\R^{2d} \times \R^{2d}} \phi(x-y)|v-w|^2 f^\K(x,v)f^\K(y,w)\,dxdydvdw \leq 0.
\]
For $I_2$, we notice from \eqref{mini} that
\[
\inttr |v|^2 M_\K[f^\K]\,dxdv = \intr \rho^\K_{f^\K} |u^{\K}_{f^\K}|^2 + (\rho^{\K}_{f^\K})^\gamma\,dx \leq \intr \rho_{f^\K}  |u_{f^\K} |^2 + (\rho_{f^\K})^\gamma\,dx \leq \inttr |v|^2 f^\K\,dxdv
\]
with $\gamma = 1+ \frac2d$. This yields
\[
I_2 \leq \inttr (|v|^2 M_\K[f^\K])* \varphi_\K\,dxdv + \K^2 \leq \inttr |v|^2 f^\K\,dxdv + \K^2.
\]
We then now combine all of the above estimates and apply the Gr\"onwall's lemma to have
\[
\inttr |v|^2 f^\K\,dxdv \leq   C\inttr |v|^2 f^\K_0\,dxdv + C\K^2
\]
for all $0 \leq t \leq T$, where $C>0$ is independent of $\K>0$.
\end{remark}

\subsection{Proof of Theorem \ref{thm_ext}} We now pass to the limit $\K \to 0$ and show that $f:=\lim_{\K \to 0} f^\K$ satisfies the equation \eqref{weak_eq} in the sense of Definition \ref{def_weak} and the kinetic energy inequality \eqref{free_ineq}.

We first present a lemma, showing some relationship between the local density and the kinetic energy, which will be used to estimate the interaction energy. Since this lemma is by now classical \cite{GS99}, we skip its proof. 

\begin{lemma}\label{lem_tech} Suppose $f \in L^1_+ \cap L^\infty(\Omega \times \R^d)$  and $|v|^2 f \in L^1(\Omega \times \R^d)$. Then there exists a constant $C>0$ such that
	\[
	\|\rho_f\|_{L^\frac{d+2}d} \leq C\left\|f\right\|_{L^\infty}^{ \frac{d+2}2} \lt(\iint_{\R^d \times \R^d} |v|^2 f\,dxdv\rt)^{\frac d{d+2}}
	\]
	and
	\[
	\|\rho_f u_f\|_{L^\frac{d+2}{d+1}} \leq C\left\|f\right\|_{L^\infty}^{d+2} \lt(\iint_{\R^d \times \R^d} |v|^2 f\,dxdv\rt)^{\frac {d+1}{d+2}}.
	\]
\end{lemma}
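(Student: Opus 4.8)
The plan is to prove both bounds by the classical truncation-and-optimization argument, carrying out the velocity integral pointwise in $x$ and integrating in $x$ only at the very end. Throughout, write $e(x) := \intr |v|^2 f(x,v)\,dv$ for the local kinetic energy density, so that $\intr e(x)\,dx = \inttr |v|^2 f\,dxdv$ is exactly the quantity on the right-hand side of both estimates.

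For the density bound I would split the velocity integral at an arbitrary radius $R>0$,
\[
\rho_f(x) = \int_{|v|\le R} f(x,v)\,dv + \int_{|v|>R} f(x,v)\,dv.
\]
The low-velocity part is controlled by the $L^\infty$ bound, $\int_{|v|\le R} f\,dv \le \|f\|_{L^\infty}|B_R| = C\|f\|_{L^\infty}R^d$, while the high-velocity part is controlled by Chebyshev, $\int_{|v|>R} f\,dv \le R^{-2}\intr |v|^2 f\,dv = R^{-2}e(x)$. Adding these and minimizing $C\|f\|_{L^\infty}R^d + R^{-2}e(x)$ over $R>0$, with optimal radius $R \sim (e(x)/\|f\|_{L^\infty})^{1/(d+2)}$, yields the pointwise interpolation inequality $\rho_f(x) \le C\|f\|_{L^\infty}^{2/(d+2)} e(x)^{d/(d+2)}$. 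Raising this to the power $\tfrac{d+2}{d}$ renders the energy factor linear, and integrating in $x$ gives $\|\rho_f\|_{L^{(d+2)/d}}^{(d+2)/d} \le C\|f\|_{L^\infty}^{2/d}\intr e(x)\,dx$; taking the $\tfrac{d}{d+2}$-th root produces the asserted $L^{(d+2)/d}$ estimate (the homogeneity of the optimization fixing the power of $\|f\|_{L^\infty}$).

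The momentum bound follows the identical scheme with the extra weight $|v|$ under the integral. Starting from $|\rho_f u_f(x)| \le \intr |v| f\,dv$ and splitting at radius $R$, the low part obeys $\int_{|v|\le R}|v| f\,dv \le C\|f\|_{L^\infty}R^{d+1}$ (since $\int_{|v|\le R}|v|\,dv \sim R^{d+1}$) and the high part obeys $\int_{|v|>R}|v| f\,dv \le R^{-1}e(x)$. Optimizing $C\|f\|_{L^\infty}R^{d+1} + R^{-1}e(x)$ over $R$, again with $R \sim (e(x)/\|f\|_{L^\infty})^{1/(d+2)}$, gives $|\rho_f u_f(x)| \le C\|f\|_{L^\infty}^{1/(d+2)} e(x)^{(d+1)/(d+2)}$. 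Raising to the power $\tfrac{d+2}{d+1}$ and integrating in $x$, then taking the $\tfrac{d+1}{d+2}$-th root, yields the second estimate.

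The argument has essentially no serious obstacle — which is precisely why the lemma can be dismissed as classical and referred to \cite{GS99}. The only points requiring care are that the optimal radius $R=R(x)$ depends measurably on $x$ through $e(x)$, so that the pointwise bounds integrate legitimately, and the bookkeeping of exponents at each stage: the power of $R$ from the volume of the velocity ball ($d$ for mass, $d+1$ for momentum), the power gained from Chebyshev ($2$ versus $1$), and the resulting split between the $\|f\|_{L^\infty}$ and energy factors, all fixed by the homogeneity of the optimization. Once the pointwise interpolation inequalities are in hand the final $L^p$ statements are immediate, because the exponents $\tfrac{d+2}{d}$ and $\tfrac{d+2}{d+1}$ are exactly those that make $e(x)$ appear linearly after raising to the appropriate power, leaving $\intr e(x)\,dx = \inttr |v|^2 f\,dxdv$ as the sole remaining integral.
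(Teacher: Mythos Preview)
Your argument is exactly the classical truncation-and-optimization proof that the paper has in mind when it writes ``Since this lemma is by now classical \cite{GS99}, we skip its proof''; there is nothing to compare, and your reasoning is correct in every step. One remark: the exponents you obtain on $\|f\|_{L^\infty}$, namely $\tfrac{2}{d+2}$ for $\rho_f$ and $\tfrac{1}{d+2}$ for $\rho_f u_f$, are the ones forced by the scaling $f\mapsto\lambda f$ (the two exponents on the right must sum to $1$), whereas the exponents $\tfrac{d+2}{2}$ and $d+2$ printed in the lemma statement are evidently typographical errors --- this does not affect the paper, since only the boundedness of $\|f\|_{L^\infty}$ is used downstream.
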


Using the uniform bound estimates and Lemma \ref{lem_tech}, we obtain that there exists $f \in L^\infty (\R^d \times \R^d \times (0,T))$ such that
\[%\begin{equation}\label{f weak star}
f^\K \stackrel{\ast}{\rightharpoonup} f \quad \mbox{in } L^\infty (\R^d \times \R^d \times (0,T)), \quad \rho_{f^\K} \stackrel{\ast}{\rightharpoonup} \rho_f \quad \mbox{in } L^\infty (0,T; L^p(\R^d)) \quad \mbox{with } p \in \lt[1,\frac{d+2}{d}\rt],
\]%\end{equation}
and
\[%\begin{equation}\label{weak star 2}
\rho_{f^\K} u_{f^\K} \stackrel{\ast}{\rightharpoonup} \rho_f u_f \quad \mbox{in } L^\infty (0,T; L^q(\R^d)) \quad \mbox{with } q \in \lt[1,\frac{d+2}{d+1}\rt].
\]%\end{equation}
On the other hand, we know $M_\K[f^\K] \in L^\infty(\R^d \times \R^d \times (0,T))$, and thus $M_\K[f^\K] \in L^p_{loc}(\R^d \times \R^d \times (0,T))$. Moreover, 
\begin{align}\label{sp_mom}
\begin{aligned}
&\frac{d}{dt}\iint_{\R^d\times \R^d} |x|^2 f^\K\,dxdv + \iint_{\R^d\times \R^d} |x|^2 f^\K\,dxdv \cr
&\quad = 2\iint_{\R^d\times \R^d} x \cdot v f^\K\,dxdv + \iint_{\R^d\times \R^d} |x|^2 M_\K[f^\K]*\varphi_\K\,dxdv\cr
&\quad \leq 3\iint_{\R^d\times \R^d} |x|^2 f^\K\,dxdv + \iint_{\R^d\times \R^d} |v|^2 f^\K\,dxdv + 2\K,
\end{aligned}
\end{align}
where we used
\begin{align*}
\iint_{\R^d\times \R^d} |x|^2 M_\K[f^\K]*\varphi_\K\,dxdv &\le  (1+\K^2)\iint_{\R^d\times \R^d} (1+|x|^2) M_\K[f^\K]\,dxdv \cr
&\le 2 \intr (1+|x|^2) \rho^\K_{f^\K} \,dx\cr
&\leq \intr (1+|x|^2)  (\rho_{f^\K}  * \theta_\K)\,dx \leq 2\intr (1+|x|^2) \rho_{f^\K} \,dx + 2\K.
\end{align*}
Since the kinetic energy is uniformly bounded in $\K>0$, applying the Gr\"onwall's lemma to \eqref{sp_mom} gives
\[
\iint_{\R^d\times \R^d} |x|^2 f^\K\,dxdv < \infty
\]
uniformly in $\K>0$. Then we now apply the strong compactness lemma \cite[Lemma 2.6]{KMT13}, based on the velocity averaging, to get
\bq\label{strong_comp}
\rho_{f^\K}  \to \rho_f \quad  \mbox{in } L^\infty (0,T; L^p(\R^d)) \quad \mbox{and} \quad   \rho_{f^\K}  u_{f^\K} \to \rho_f u_f \quad  \mbox{in } L^\infty (0,T; L^p(\R^d))
\eq
for $p \in [1,\frac{d+2}{d+1})$. On the other hand, it follows from \eqref{strong_comp} that
\bq\label{conv_ae0}
\|\rho_{f^\K}  * \theta_\K - \rho_f\|_{L^1} \leq \|(\rho_{f^\K} - \rho_f)*\theta_\K\|_{L^1} + \|\rho_{f} * \theta_\K - \rho_f\|_{L^1} \to 0 \quad \mbox{as} \quad \K \to 0,
\eq
Similarly, $\|(\rho_{f^\K} u_{f^\K}) * \theta_\K - \rho_f u_f\|_{L^1}\rightarrow 0$ as $\K \to 0$. Thus combining that with 
\[
\K^{d+1}|\rho_{f^\K} * \theta_\K| \leq C\K \quad \mbox{and} \quad \K^{2d+1}|(\rho_{f^\K}u_{f^\K}) * \theta_\K|^2 \leq C\K
\]
deduces that 
\bq\label{conv_ae}
\rho^{\K}_{f^\K} \to \rho_f  \quad \mbox{and} \quad u^{\K}_{f^\K} \to u_f  \quad \mbox{a.e. on } E 
\eq
where $E := \{(x,t)\in \R^d\times [0,T] : \rho_f(x,t) > 0\}$. 
Indeed,
\begin{align*}
|\rho^{\K}_{f^\K} - \rho_f |&=\left|\frac{\rho_{f^\K}*\theta_\K -(1+\K^{d+1}\rho_{f^\K}*\theta_\K)\rho_f}{1+\K^{d+1}\rho_{f^\K}*\theta_\K}\right|\cr
&=\left|\frac{\rho_{f^\K}*\theta_\K-\rho_f -\K^{d+1}\rho_f(\rho_{f^\K}*\theta_\K)}{1+\K^{d+1}(\rho_{f^\K}*\theta_\K)}\right|\to 0\quad \mbox{a.e.}\quad \mbox{on}\quad E
\end{align*}
due to \eqref{conv_ae0}. We now show that the limiting function $f$ satisfies our main equation in the distributional sense. For this, it is sufficient to deal only with terms related to the equilibrium function and velocity alignment since the other terms are linear.  We first observe that 
	\begin{align*}
	M_\K[f^\K]*\varphi_\K-M[f] &=(M_\K[f^\K]-M[f])*\varphi_\K+(M[f]*\varphi_\K-M[f]) \cr
	&=:I_1+I_2.
	\end{align*}
Note  that $M[f]*\varphi_\K$ converges to $M[f]$ a.e. as $\K\rightarrow 0$, and for any $p\in(1,\infty)$,
$$
\|M[f]*\varphi_\K\|_{L^p}\le \|M[f]\|_{L^p}\|\varphi_\K\|_{L^1}=\|M[f]\|_{L^1}^{\frac 1p}=\|f\|_{L^1}^{\frac 1p} <C.
$$
Thus one can see that $M[f]*\varphi_\K$ weakly converges to $M[f]$ in $L^p$, and this leads to
$$
\iint_{E\times \R^d} I_2\eta\,dxdvdt\rightarrow 0\quad\mbox{as}\quad\K\to0
$$
for any test function $\eta \in \mc^1_c(\R^d \times \R^d \times [0,T])$. For $I_1$, set $D:=B(0,1)\cup supp(\eta)$. We then have
\begin{align}\begin{split}\label{I1 weak}
&\iint_{\R^d\times \R^d} \{(M_\K[f^\K]-M[f])*\varphi_\K\} \eta\,dxdv\cr
&= \iiiint_{\R^{2d}\times \R^{2d}} (M_\K[f^\K]-M[f])(x-y,v-w)\varphi_\K(y,w) \eta(x,v)\,dxdydvdw\cr
&\le \|\eta\|_{L^\infty}\iint_{\mathbb{R}^d\times \mathbb{R}^d} |M_\K[f^\K]-M[f]|(x,v)\mathds{1}_{D}\,dxdv.
\end{split}\end{align}
On the other hand, it follows from \eqref{conv_ae} that
$$
(c_d \rho^{\K}_{f^\K})^{\frac 1d}    \to (c_d \rho_f)^{\frac 1d},\quad  u^{\K}_{f^\K}\to u_f \quad \mbox{a.e. on } E.
$$
Recalling the definition of $M$:
$$
M_\K[f^\K]=  \mathds{1}_{|u^{\K}_{f^\K}-v|^d\le c_d\rho^{\K}_{f^\K}}, \qquad  M[f]=  \mathds{1}_{|u_{f}-v|^d\le c_d\rho_{f}}.
$$
this implies that for each $v$ in the closure of $B(u_f, (c_d\rho_f)^{\frac 1d})$,
$$
M_\K[f^\K](v)\rightarrow 1,\quad \mbox{and otherwise}\quad M_\K[f^\K](v)\rightarrow 0\quad \mbox{a.e. on } E\quad \mbox{as}\quad \K\to 0,
$$
i.e. $M_\K[f^\K](v)$ converges to $M[f]$ a.e. on $E\times \mathbb{R}^d$. Moreover, we have from the $L^1$ bound of $f$ and $f^\K$ that for any $p\in(1,\infty)$,
\begin{align*}
\|M_\K[f^\K]-M[f]\|_{L^p}^p&\le C\iint_{\mathbb{R}^d\times \mathbb{R}^d} M_\K[f^\K]+M[f]\,dxdv\le C\int_{\mathbb{R}^d} \rho_{f^\K}+\rho_f\,dx <C.
\end{align*}
Thus we see that $|M_\K[f^\K]-M[f]|$ weakly converges to $0$ in $L^p$, which combined with \eqref{I1 weak} gives
$$
\iint_{E\times \R^d}I_1 \eta\,dxdv\rightarrow 0\quad\mbox{as}\quad\K\to0.
$$
On the other hand, on $E^c \times \R^d$, we estimate
\begin{align*}
\lt|\lim_{\K \to 0}\iint_{E^c \times \R^d} (M_\K[f^\K]*\varphi_\K-M[f]) \eta \,dxdtdv \rt| &\leq \|\eta\|_{L^\infty} \lim_{\K \to 0} \iint_{E^c \times \R^d} M_\K[f^\K] \,dxdtdv \cr
&\leq\lim_{\K \to 0}\int_{E^c} \rho_{f^\K}\,dxdt\cr
&=\int_{E^c} \rho_f \,dx dt\cr
&= 0.
\end{align*}
Thus we conclude
\begin{align*}
&\lim_{\K \to 0}\int_0^T\iint_{\R^d \times \R^d}  (M_\K[f^\K]*\varphi_\K-M[f]) \eta\,dxdvdt \cr
&\quad = \lim_{\K \to 0}\iint_{E \times \R^d}  \{(M_\K[f^\K]-M[f])*\varphi_\K\}  \eta\,dxdtdv + \lim_{\K \to 0}\iint_{E^c \times \R^d}  \{(M_\K[f^\K]-M[f])*\varphi_\K\}  \eta\,dxdtdv\cr
&\quad = \iint_{E \times \R^d}  (I_1+I_2)  \eta\,dxdtdv\cr
&\quad = 0.
\end{align*}
Finally, it only remains to deal with the alignment term:
 \begin{align*}
&\int_0^T \iint_{\R^d\times \R^d} f^\K F_{\phi}[f^\K]\cdot\nabla_v\eta\,dxdvdt\cr
&\quad =\int_0^T \iint_{\R^d\times \R^d} f^\K  \phi * (\rho_{f^{\K}} u_{f^{\K}}) \cdot\nabla_v\eta\,dxdvdt-\int_0^T \iint_{\R^d\times \R^d} f^\K \phi * \rho_{f^{\K}}v\cdot\nabla_v\eta\,dxdvdt.
\end{align*}
Since $f^\K \stackrel{\ast}{\rightharpoonup} f$ in $L^\infty (\R^d \times \R^d \times (0,T))$, we see that $f^\K \zeta $ converges weakly in $L^1 (\R^d \times \R^d \times (0,T))$ to $f\zeta$ for any test function $\zeta\in C_c(\R^d \times \R^d \times (0,T))$. Also, we have from \eqref{strong_comp} that
$$
\phi * \rho_{f^{\K}} u_{f^{\K}}\rightarrow  \phi * \rho_f u_f\quad\mbox{and}\quad \phi * \rho_{f^{\K}} \rightarrow   \phi *\rho_f \quad\mbox{a.e.}\quad\mbox{on}\quad \R^d\times\R^d\times(0,T) ,
$$
and these are  bounded uniformly in $\K$ thanks to Proposition \ref{prop_lin} and  the kinetic energy estimate. Therefore, we conclude that 
$$
\int_0^T \iint_{\R^d\times \R^d} f^\K F_{\phi}[f^\K]\cdot\nabla_v\eta\,dxdvdt \rightarrow \int_0^T \iint_{\R^d\times \R^d} f F_{\phi}[f]\cdot\nabla_v\eta\,dxdvdt
$$
as $\K \to 0$. This completes the proof.

%%%%%%%%%%%%%%%%%%%%%%%%%%%%%%%%%%%%%%%%%%%%%%%%%%%%%%%%%%%%%%%%%%%%%%%%%%%%%%%%%%%%%%%%%%%%%%%%%%%%%%%%%%%%%%%%%%%%%%%%%%%%%%%%%%%%%%%%%%%%%%
%
%
%    Acknowledgments
%
%
%%%%%%%%%%%%%%%%%%%%%%%%%%%%%%%%%%%%%%%%%%%%%%%%%%%%%%%%%%%%%%%%%%%%%%%%%%%%%%%%%%%%%%%%%%%%%%%%%%%%%%%%%%%%%%%%%%%%%%%%%%%%%%%%%%%%%%%%%%%%%%%%

\section*{Acknowledgments}
Y.-P. Choi and B.-H. Hwang were supported by National Research Foundation of Korea(NRF) grant funded by the Korea government(MSIP) (No. 2022R1A2C1002820).

%%%%%%%%%%%%%%%%%%%%%%%%%%%%%%%%

%	\section*{Acknowledgements}

%%%%%%%%%%%%%%%%%%%%%%%%%%%%%%%%%%%%%%%%%%%%%%%%%%%%%%%%%%%%%%%%%%%%%%%%%%%%%%%%%
%
%
%                        thebibliography
%
%
%%%%%%%%%%%%%%%%%%%%%%%%%%%%%%%%%%%%%%%%%%%%%%%%%%%%%%%%%%%%%%%%%%%%%%%%%%%%%%%%%

\end{document}